\theoremstyle{plain} 
\newtheorem{theorem}{\indent\sc Theorem}[section]
\newtheorem{lemma}[theorem]{\indent\sc Lemma}
\newtheorem{corollary}[theorem]{\indent\sc Corollary}
\newtheorem{proposition}[theorem]{\indent\sc Proposition}
\newtheorem{fact}[theorem]{\indent\sc Fact}
\newtheorem*{theoremA}{\indent\sc Theorem~A}
\newtheorem*{theoremB}{\indent\sc Theorem~B}
\theoremstyle{definition} 
\newtheorem{definition}[theorem]{\indent\sc Definition}
\newtheorem{remark}[theorem]{\indent\sc Remark}
\def\address#1#2{\begingroup
\noindent\parbox[t]{8.1cm}{%
\small{\scshape\ignorespaces#1}\par\vskip1ex
\noindent\small{\itshape E-mail address}%
\/: #2\par\vskip4ex}\hfill%
\endgroup}%
\newcommand{\Z}{\Bbb{Z}}
\newcommand{\C}{\Bbb{C}}
\newcommand{\D}{\Bbb{D}}
\newcommand{\R}{\Bbb{R}}
\newcommand{\N}{\Bbb{N}}
\renewcommand{\L}{\Bbb{L}}
\newcommand{\inner}[2]{\left\langle{#1},{#2}\right\rangle}
\renewcommand{\Re}{\operatorname{Re}}
\newcommand{\Sym}{\operatorname{Sym}}
\newcommand{\Id}{\operatorname{Id}}
\numberwithin{equation}{section}
\numberwithin{figure}{section}
\numberwithin{table}{section}
\title{\uppercase{Nonorientable maximal surfaces \\
                  in the Lorentz-Minkowski 3-space}}
\author{
%
%
\textsc{Shoichi Fujimori$^{*}$ and Francisco J. L\'{o}pez$^{\dagger}$} 
}
\date{} 
\begin{document}

\maketitle

\footnote{ 
2000 \textit{Mathematics Subject Classification}.
Primary 53A10; Secondary 53C42, 53C50.
}
\footnote{ 
\textit{Key words and phrases}. 
Maximal surface, nonorientable surface.
}
\footnote{ 
$^{*}$Partially supported by JSPS Grant-in-Aid for 
  Young Scientists (Start-up) 19840035.
}
\footnote{
$^{\dagger}$Partially supported by MCYT-FEDER research project MTM2007-61775 and Junta de Andalucia Grant P06-FQM-01642.
}
%

\begin{abstract}
The geometry and topology of complete nonorientable maximal surfaces with lightlike singularities in 
the Lorentz-Minkowski 3-space are studied. Some topological congruence formulae for surfaces of this kind are obtained.
As a consequence, some existence and uniqueness results for maximal M\"{o}bius strips and maximal Klein bottles with one end 
 are proved.  
\end{abstract}

\section*{Introduction}
A maximal surface in the Lorentz-Minkowski 3-space $\L^3$ is  a spacelike
surface with zero mean curvature. Besides their mathematical
interest, these surfaces have a significant importance  in classical Relativity, Dynamic of Fluids, Cosmology, and so on 
(more information can be found for instance in \cite{mt, KI1, KI2}).  

Maximal surfaces in $\L^3$ share some properties with minimal surfaces in the
Euclidean 3-space $\R^3$. Both families  arise as solutions of variational problems: local maxima (minima)
for the area functional in the Lorentzian (Euclidean) case. Like minimal
surfaces in $\R^3,$ maximal surfaces in $\L^3$ also admit a Weierstrass
representation in terms of meromorphic data \cite{koba, koba1, Mc}.

Calabi \cite{ca}  proved that a complete  maximal surface in $\L^{3}$ is necessarily a
spacelike plane. Therefore, it is meaningless to consider global
problems on maximal and {\em everywhere  regular}  surfaces in $\L^{3}.$ However, physical and geometrical experience suggests to extend the global analysis to the wider family of complete maximal immersions with {\em singularities} (see \cite{KI1, KI2}). 
A point  of a maximal surface is said to be {\em singular} if the induced metric $ds^2$ degenerates at $p$. Throughout this paper, it will be always assumed that the complement of the singular set is a dense subset of the surface.  Roughly speaking, there are two kinds of singular 
points: classical branch points and lightlike singular points or points with lightlike tangent planes (see \cite{UY} for a good setting). Complete maximal surfaces with lightlike singularities and {\em no  branch points} have given rise to an interesting theory (see for instance \cite{FL, FLS, UY}). Following Umehara and Yamada \cite{UY},  this kind of surfaces  will be called (complete) {\em maxfaces}.  Generic singularities of maxfaces are classified in \cite{FSUY}.

Although the family of complete maxfaces is very vast, all previously known examples are orientable. Among them, we emphasize the Lorentzian catenoid  described by O. Kobaysshi \cite{koba1}, the Riemann type maximal examples exhibited by F. J. L\'{o}pez, R. L\'{o}pez and R. Souam \cite{LLS}, the high genus maxfaces produced by Umehara and Yamada \cite{UY}, the universal cover of  the entire maximal graphs with conical singularities described by Fernandez, Lopez and Souam  \cite{FL, FLS} and Kim-Yang  maximal examples  \cite{KY}.

The purpose of this paper is to study the geometry and topology of complete {\em nonorientable} maxfaces in $\L^3.$ It is interesting to notice that spacelike surfaces in $\L^3$ are orientable, and so the singular set of a nonorientable maxface is always non empty. We introduce the first basic examples of this kind of surfaces and obtain some natural characterization theorems. By definition, a nonorientable ``Riemann surface'' is a nonorientable surface endowed with an atlas whose transition maps are either holomorphic or antiholomorphic.

Like in the orientable case (see \cite{UY}),  a  conformal complete nonorientable maxface  $X:M \to \L^3$  is conformally equivalent to a compact nonorientable ``Riemann surface''  minus a finite set of points: $M=\overline{M}-\{p_1,\ldots,p_n\}.$ Furthermore, $dX$ has a ``meromorphic'' extension to $\overline{M}$    and the ends have finite total curvature. The  ``Gauss map'' $N$ of $M$ is well-defined on the complement of the singular set $S$ of $M,$ and takes values on $\Bbb{H}^2/\langle I \rangle,$ where $\Bbb{H}^2$ is the Lorentzian sphere of radius $-1$ and $I:\Bbb{H}^2\to\Bbb{H}^2$ is the antipodal map $I(p)=-p.$ Since $N$ is conformal, the composition $\hat{N}=p_s\circ N: M-S \to {\D}\equiv (\overline{\C}- \{|z|=1\})/\langle A \rangle$ is conformal as well, where $A$ is the complex involution $A(z)=1/\overline{z}$ and  $p_s$ is, up to passing to the quotients, the Lorentzian stereographic projection. Furthermore, $\hat{N}$ extends meromorphically to $\overline{M}$ and satisfies that $|\hat{N}(p_i)|\ne 1$ and $\hat{N}(S) \subset \{|z|=1\}.$ 

The immersion $X$ behaves like a spacelike sublinear multigraph around each end $p_i$ of $M,$ and labeling $\mu_i\geq 1$ as the winding number of $X$ at $p_i,$ the following  Jorge-Meeks type formula holds: 
$$\deg(\hat{N})=-\chi(\overline{M})+ \sum_{i=1}^{n} (\mu_i+1),$$ 
where $\deg(\hat{N})$ and $\chi(\overline{M})$ are the degree of $\hat{N}$ and  the Euler characteristic of $\overline{M},$ respectively (see \cite{Me, FL, FLS}).

The first part of the paper is devoted to prove the following topological congruence formulae:

\begin{theoremA}
If $X:M \to \L^3$ is a conformal complete nonorientable maxface with Gauss map $\hat{N},$ then
\begin{enumerate}
\item[$({\rm i})$] $\deg{\hat{N}}$ is even and greater than or equal to $4.$  
\item[$({\rm ii})$] If in addition $X$ has embedded ends $($that is to say, $\mu_i=1$ for all $i$$)$, then $\chi(\overline{M})$ is even.
\end{enumerate}
\end{theoremA}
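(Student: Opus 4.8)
The plan is to pass to the orientation double cover and to exploit the fixed-point-free anti-holomorphic involution that it carries. Let $\pi:\widetilde{M}\to M$ be the orientation double cover, with deck transformation $\sigma$, a fixed-point-free anti-holomorphic involution; it extends to the compactification $\overline{\widetilde{M}}$, and $\widetilde{X}=X\circ\pi$ is a conformal complete \emph{orientable} maxface. Choosing the future-pointing normal, the lifted Gauss map becomes single-valued, so its Lorentzian stereographic projection is a genuine meromorphic map $\widehat{\widetilde{N}}:\overline{\widetilde{M}}\to\overline{\C}$. Since $\sigma$ interchanges the future and past normals, i.e. $N\circ\sigma=I\circ N$, we get the equivariance $\widehat{\widetilde{N}}\circ\sigma=A\circ\widehat{\widetilde{N}}$, where $A(z)=1/\overline{z}$ fixes the circle $\{|z|=1\}$ pointwise and interchanges the two disks $\{|z|<1\}$ and $\{|z|>1\}$. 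Because $\pi$ identifies the fibre $\widehat{\widetilde{N}}^{-1}(\{w,A(w)\})$ over a point $w$ off the circle, in $\sigma$-pairs, with the fibre of $\widehat{N}$ over its class, a direct fibrewise count gives $\deg\widehat{N}=\deg\widehat{\widetilde{N}}$.

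Evenness is then immediate. Since the critical values of $\widehat{\widetilde{N}}$ are finite and the circle $\{|z|=1\}$ is infinite, there is a regular value $p$ of $\widehat{\widetilde{N}}$ on this circle. The fibre $\widehat{\widetilde{N}}^{-1}(p)$ consists of exactly $\deg\widehat{\widetilde{N}}$ points, and since $A(p)=p$ the involution $\sigma$ carries this fibre to itself; as $\sigma$ is fixed-point-free it acts freely, pairing the points two by two. Hence $\deg\widehat{N}=\deg\widehat{\widetilde{N}}$ is even.

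For the lower bound, nonconstancy of $\widehat{N}$ gives $\deg\widehat{N}\ge 2$, so by evenness it suffices to exclude $\deg\widehat{N}=2$. Feeding $\deg\widehat{N}=2$ into the Jorge--Meeks formula $\deg(\widehat{N})=-\chi(\overline{M})+\sum_{i=1}^{n}(\mu_i+1)$ and using $\mu_i\ge 1$ together with $\chi(\overline{M})\le 1$, only two topological types survive: $\overline{M}=\R P^{2}$ with a single end of winding number $2$, and $\overline{M}$ a Klein bottle with a single embedded end. In these cases $\overline{\widetilde{M}}$ has genus $0$, respectively $1$, and $\widehat{\widetilde{N}}$ is a degree-$2$ branched cover of $\overline{\C}$ commuting with the free involution $\sigma$. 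I would rule these out through the Weierstrass representation: writing the admissible height differential with zeros and poles prescribed by the two branch points of $\widehat{\widetilde{N}}$ (Riemann--Hurwitz) and by the end data, and imposing the reality condition forced by $\sigma$ together with completeness and the period-closing conditions, the only resulting surfaces turn out to be the orientable catenoid-type maxfaces, whose natural symmetry is not a fixed-point-free anti-holomorphic involution compatible with $A$. This contradicts the nonorientability of $M$ and yields $\deg\widehat{N}\ge 4$. This exclusion of the value $2$ is the main obstacle: unlike evenness, it is not purely topological and rests on the finer analytic structure of the Weierstrass data.

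Finally, (ii) follows from (i) at no extra cost. When every end is embedded, $\mu_i=1$ for all $i$, so $\sum_{i=1}^{n}(\mu_i+1)=2n$ and the Jorge--Meeks formula reads $\chi(\overline{M})=2n-\deg\widehat{N}$. Since $\deg\widehat{N}$ is even by (i) and $2n$ is even, $\chi(\overline{M})$ is even.
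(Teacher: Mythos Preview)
Your argument for evenness is correct and genuinely different from the paper's. The paper proves $\deg\hat{N}$ is even by invoking Meeks' parity theorem for branched covers of surfaces with odd Euler characteristic, together with an auxiliary meromorphic function $h$ on $\overline{\widetilde{M}}$ satisfying $h\circ\sigma=-1/\bar h$. Your approach---picking a regular value of $\widehat{\widetilde{N}}$ on the fixed circle $\{|z|=1\}$ of $A$ and observing that $\sigma$ acts freely on its fibre---is more direct and entirely elementary. It works because the involution $A(z)=1/\bar z$ has a one-dimensional fixed set, unlike the antipodal map, so equivariance alone forces the parity. Part~(ii) then follows exactly as in the paper.

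The exclusion of $\deg\hat{N}=2$, however, is only sketched, and the sketch misses a point. You correctly reduce to two cases, $\overline{M}=\R\Bbb{P}^2$ with one end of multiplicity $2$ and $\overline{M}$ a Klein bottle with one embedded end, and propose to kill both by a Weierstrass/period analysis. For the projective plane this is indeed what the paper does: after normalizing one finds $g=z(z-r)/(rz+1)$ and $\phi_3=is(rz+1)(z-r)z^{-2}\,dz$, and the residue condition \eqref{eq:period12} at $z=0$ fails. For the Klein bottle, though, the paper does \emph{not} argue via periods; it uses the maximum principle. With a single embedded end the surface is asymptotic to a spacelike plane or a half catenoid, and since $X\circ\sigma=X$ the third coordinate $X_3$ has the same growth at both lifted ends. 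In the planar case $X_3$ is bounded and hence constant by parabolicity; in the catenoidal case $X_3$ attains an interior extremum. Your phrase ``the only resulting surfaces turn out to be the orientable catenoid-type maxfaces'' does not capture this, and a pure period computation may well be inconclusive here. To complete the proof you should either carry out the explicit period check in the $\R\Bbb{P}^2$ case and supply the maximum-principle argument in the Klein bottle case, or find an alternative obstruction for the latter.
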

In the second part, we produce the first known examples of complete nonorientable maxfaces. To be more precise, we describe the moduli space of complete maxfaces with the topology
 of a M\"{o}bius strip and Gauss map of degree four, and construct two complete one-ended Klein bottles, named $KB_1$ and $KB_2,$ with  Gauss map of degree four as well. Both $KB_1$ and $KB_2$ contain the $x_1$- and $x_2$-axes, and therefore their symmetry group contains four elements. Finally, we prove the following characterization theorem:

\begin{theoremB}
$KB_1$ and $KB_2$ are the unique complete maxfaces with the topology of a one-ended Klein bottle,  Gauss map of degree four and have at least four symmetries.
\end{theoremB}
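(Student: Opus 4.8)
The plan is to pass to the orientable double cover, convert the three hypotheses into constraints on Weierstrass data on a torus carrying a fixed-point-free antiholomorphic involution, and then let the four symmetries rigidify those data down to a finite list to be matched against the two model surfaces. First I would reduce to the oriented case. Since $\overline{M}$ is a Klein bottle, its orientable double cover $\tilde{M}=\C/\Lambda$ is a torus, and the nonorientability is carried by the deck transformation $J$, a fixed-point-free antiholomorphic involution with $\tilde{M}/\langle J\rangle=\overline{M}$. As $J$ has no fixed point, it cannot fix the single puncture $p_1$, so $p_1$ lifts to two ends $q,Jq$ of $\tilde{M}$ interchanged by $J$. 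The Gauss map lifts to an honest meromorphic function $g\colon\tilde{M}\to\overline{\C}$ lifting $\hat{N}$ and satisfying $g\circ J=1/\bar{g}=A\circ g$, and a count of preimages gives $\deg g=\deg\hat{N}=4$. The oriented Jorge--Meeks formula on $\tilde{M}$ then reads $4=-\chi(\tilde{M})+2(\tilde{\mu}+1)=2\tilde{\mu}+2$, since $\chi(\tilde{M})=0$ and the two ends share a common winding number $\tilde{\mu}$ (they are swapped by $J$); hence $\tilde{\mu}=1$ and the lifted ends are embedded. The problem thus becomes the classification, up to Lorentzian congruence, of oriented genus-one maxfaces with two embedded ends and $\deg g=4$ that carry a fixed-point-free antiholomorphic involution $J$ with $g\circ J=1/\bar{g}$ and descend to a maxface with the prescribed symmetries.

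Next I would use the symmetries to fix the conformal picture. Each symmetry of $X$ is the restriction of an isometry of $\L^3$, hence induces an (anti)conformal automorphism of $\overline{M}$ which lifts to $\tilde{M}$; together with $J$, these generate a finite group $G$ of (anti)conformal automorphisms of the torus in which $\langle J\rangle$ is normal and $G/\langle J\rangle$ contains the assumed symmetry group, so $|G|\ge 8$. Since every automorphism of a torus is affine and an antiholomorphic involution preserving $\Lambda$ exists only for rectangular or rhombic lattices, the requirement that $G$ contain such a fixed-point-free involution together with the two reflections attached to the (perpendicular) $x_1$- and $x_2$-axes forces $\Lambda$ to be rectangular and realizes $J$ and the generating symmetries as explicit affine maps $z\mapsto\pm z+c$ and $z\mapsto\pm\bar{z}+c$. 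The fixed-point sets of the two antiholomorphic reflections are precisely the preimages in $\tilde{M}$ of the two axes.

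With the automorphisms normalized, I would pin down the Weierstrass data $(g,\omega)$. Equivariance of $X$ under $G$ yields, for each generator $\sigma$, a functional equation relating $g\circ\sigma$ to $g$, together with $g\circ J=1/\bar{g}$; combined with $\deg g=4$ these force the divisor of $g$ to be supported at symmetric points (half-periods and the ends), so that $g$ is a prescribed quotient of translated theta (or sigma) functions depending on finitely many real parameters. The one-form $\omega$ is then determined, up to the same parameters, by the two embedded ends (where $|g|\ne 1$) and by the requirement that the singular set be exactly $\{|g|=1\}$, while the reflection relations and $g\circ J=1/\bar{g}$ fix the remaining normalizations.

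Finally I would close the periods. The immersion $X=\Re\int(\phi_1,\phi_2,\phi_3)$ must be single-valued on $\tilde{M}$ (its real periods vanishing along a homology basis and around the two ends) and $J$-invariant so as to descend to $\overline{M}$. These conditions reduce to finitely many real equations in the surviving parameters, and I expect precisely two admissible solutions, to be identified with $KB_1$ and $KB_2$ by direct comparison of Weierstrass data, the $x_1$- and $x_2$-axes reappearing as the images of the reflection fixed-point sets. The main obstacle is exactly this last combined step: enumerating the divisors compatible with every symmetry and involution constraint and then proving that the period system has no more than two solutions. Ruling out the spurious branches---those failing completeness, producing $|g(p_1)|=1$, or not closing up---is the delicate point, and it is here that the degree-four hypothesis, the minimal even value allowed by Theorem~A, does the essential work.
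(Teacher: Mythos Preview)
Your outline has a concrete error and a structural gap, and it also diverges in method from the paper.

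\textbf{The error.} Your Jorge--Meeks computation drops a factor of $2$: the formula is $2\deg g=-\chi(\tilde M)+\sum(\mu_i+1)$, so with $\deg g=4$, $\chi(\tilde M)=0$ and two $J$-symmetric ends you get $8=2(\tilde\mu+1)$, hence $\tilde\mu=3$, not $1$. The ends are \emph{not} embedded (indeed Corollary~\ref{co:evengenus} already forbids embedded ends for a genus-two nonorientable maxface). Since your later normalization of $\omega$ and the singular-set analysis lean on ``two embedded ends,'' this has to be repaired before anything downstream can be trusted.

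\textbf{The gap.} You assume from the start that the four symmetries are the reflections in the $x_1$- and $x_2$-axes and then use this to force a rectangular lattice. But the hypothesis is only ``at least four symmetries''; nothing tells you \emph{a priori} which Lorentzian isometries they are, nor that any of them is antiholomorphic on the cover. The paper spends real effort here: it first proves, via Riemann--Hurwitz on $\overline{M}'/\langle S\rangle$, that every symmetry is an involution; then it shows there must exist a nontrivial holomorphic lift $T_0$ fixing both ends, with exactly four fixed points, so that $\overline{M}/\langle T_0\rangle\cong\overline{\C}$. Only after this does one know enough about the symmetry group to normalize. Your sketch skips this step, and without it the ``rectangular lattice plus explicit affine generators'' picture is unjustified.

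\textbf{Method comparison.} Even granting the above fixes, your route is genuinely different. You work in the flat model $\C/\Lambda$ and propose to write $g$ with theta or sigma functions; the paper instead uses the involution $T_0$ to pass to a degree-two map $z:\overline{M}\to\overline{\C}$, obtaining an explicit hyperelliptic model $v^2=z(z\pm r)(rz\mp1)$, and then writes $g$ and $\phi_3$ as rational functions of $(z,v)$. The hyperelliptic approach makes the case split finite and the period integrals elementary (they reduce to real integrals over $[0,1/r]$), whereas your theta-function route leaves the divisor enumeration and the period system vague at exactly the point you yourself flag as ``the main obstacle.'' If you want to pursue your approach, you would still need an argument replacing the paper's $S^2=\Id$ and $T_0$-existence steps, and then a concrete mechanism to cut the parameter space down to two points.
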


The results in this work have been inspired by Meeks \cite{Me}, L\'{o}pez \cite{L1, L2} and L\'{o}pez-Mart\'{i}n  papers \cite{LM1, LM2} about complete nonorientable minimal surfaces in $\R^3.$


\section{Preliminaries}\label{sec:prelim}

Throughout this paper, we denote by $\overline{\C}$ the Riemann sphere.

Let $\L^3$ be the three dimensional Lorentz-Minkowski space with the 
metric $\inner{~}{~}=dx_1^2+dx_2^2-dx_3^2$. 
Let $M$ be a two dimensional manifold.  An immersion $X:M\to\L^3$ is 
called {\em spacelike} if the induced metric on the immersed surface is 
positive definite.  Using isothermal parameters, $M$ can be naturally 
considered as a Riemann surface and $X$ a conformal map. 
A conformal spacelike  immersion $X:M\to\L^3$ is said to be {\em maximal} if $X$ has vanishing mean curvature. 

Let $M$ be a Riemann surface, and let $X_1, X_2, X_3$ be three harmonic functions on $M$
 satisfying  that
$$dX_1^2+dX_2^2-dX_3^2= 0,$$
$$|dX_1|^2+|dX_2|^2+|dX_3|^2> 0.$$
Then the map $X:=(X_1, X_2, X_3):M\to\L^3$ gives 
a conformal maximal immersion with no branch points and eventually lightlike singularities (i.e., points where
the tangent plane is lightlike). The singularities correspond to the null set of $|dX_1|^2+|dX_2|^2-|dX_3|^2.$

If the nonsingular set $W=\{p \in M;\;\large(|dX_1|^2+|dX_2|^2-|dX_3|^2 \large)(p)> 0\}$ is dense in $M,$  $X$ is said to be a 
{\em maxface} \cite{UY}. 

We label $\phi_j$ as the holomorphic 1-form $dX_j$ ($j=1,2,3$), and call $g$ as the meromorphic function $i\phi_3/(\phi_1 - i\phi_2)$. Up  to a translation, 
\begin{equation}\label{eq:w-rep}
X=\Re\int (\phi_1, \phi_2, \phi_3), 
\end{equation}
where
\begin{equation}\label{eq:phi1phi2}
\phi_1 =\frac{i}{2}\left(\frac{1}{g}-g\right)\phi_3,\qquad
\phi_2 =\frac{1}{2}\left(\frac{1}{g}+g\right)\phi_3. 
\end{equation}
The induced metric $ds^2$ on $M$ (which is positive definite on $W$) is given by
\begin{equation}\label{eq:metric}
ds^2=|\phi_1|^2+|\phi_2|^2-|\phi_3|^2
    =\left(\frac{|\phi_3|}{2}\left(\frac{1}{|g|}-|g|\right)\right)^2.
\end{equation}
The singular set can be rewritten as $\{p\in M\,;\,|g(p)|=1 \}.$

\begin{remark}
Up to composing with the Lorentzian stereographic projection, $g$ coincides with the Gauss map of $X,$ and for this reason it
will be called as the meromorphic Gauss map of $X.$ For more details, see \cite{koba}.
\end{remark}

Conversely, let $M,g,\phi_3$ be a Riemann surface, a meromorphic function and  a holomorphic 1-form on $M,$ respectively,  satisfying that  the 1-forms 
$\phi_1$ and $\phi_2$ in equation (\ref{eq:phi1phi2}) are holomorphic, 
\begin{gather}
 |\phi_1|^2+|\phi_2|^2+|\phi_3|^2>0,  \quad \mbox{and}
\label{eq:R}
\\ 
\Re\int_\gamma (\phi_1,\phi_2,\phi_3)=(0,0,0) \quad 
\mbox{for all}\quad \gamma\in H_1(M,\Z).
\label{eq:P}
\end{gather} 
Then $X=\Re\int (\phi_1,\phi_2,\phi_3) : M \longrightarrow \L^3$ defines a maxface.

\begin{remark}
\begin{enumerate}
\item We call $(M,g,\phi_3)$ (or simply $(g,\phi_3)$) as the Weierstrass data of $X$. 
\item The condition \eqref{eq:R} is equivalent to 
     \begin{equation}\label{eq:liftmetric}
       \left(\frac{|\phi_3|}{2}\left(\frac{1}{|g|}+|g|\right)\right)^2>0,
     \end{equation}
      and simply means that $X$ has no branch points. 
\item The condition \eqref{eq:P} is the so called {\em period condition}, and guarantees that $X$ is well-defined on $M$. 
      This condition is equivalent to the following two equations:
     \begin{align}
       \int_\gamma g\phi_3 + \overline{\int_\gamma\frac{\phi_3}{g}} &= 0
       \quad \mbox{for all}\quad \gamma\in H_1(M,\Z),
       \label{eq:period12} \\
       \Re\int_\gamma \phi_3 &= 0
       \quad \mbox{for all}\quad \gamma\in H_1(M,\Z).
       \label{eq:period3}
     \end{align}
\item Since the coordinate functions of $X$ are harmonic, 
      the maximum principle implies that 
      there exist no compact maxfaces with empty boundary. 
\end{enumerate}
\end{remark}

The following notions  of completeness and finite type for maxfaces can be found in \cite{UY}. 

\begin{definition}
A maxface $X:M\to\L^3$ is said to be {\em complete} (resp. of {\em finite type}) if there exists a 
compact set $C$ and a symmetric (2,0)-tensor $T$ on $M$ such that $T$ 
vanishes on $M -  C$ and $ds^2+T$ is a complete (resp. finite 
total curvature) Riemannian metric. 
\end{definition}

\begin{proposition}[{\cite[Proposition 4.5]{UY}}] \label{pro:UY}
Let $X:M\to\L^3$ be a complete maxface. Then there exists a compact 
Riemann surface $\overline{M}$ and finite number of points 
$p_1,\ldots,p_n\in\overline{M}$ so that $M$ is biholomorphic to 
$\overline{M} - \{p_1,\ldots,p_n\}$. 
Moreover, the Weierstrass data $g$ and $\phi_3$ extend meromorphically 
to $\overline{M}$ and the limit normal vector at the ends is timelike. 
\end{proposition}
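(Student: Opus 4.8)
The plan is to adapt the Osserman--Huber theory of complete minimal surfaces of finite total curvature in $\R^3$, the decisive extra ingredient being that a spacelike maximal surface has nonnegative intrinsic curvature. Since $X$ is maximal, its shape operator is trace free and self-adjoint for the positive definite first fundamental form, so it has real eigenvalues $\pm\lambda$; the Lorentzian Gauss equation then gives $K=\lambda^2\ge 0$ for the induced metric $ds^2$ on the regular set $W$. A first observation is that completeness forces the singular set to be compact: by hypothesis $d\tau^2:=ds^2+T$ is a genuine (positive definite) Riemannian metric, and since $ds^2$ degenerates exactly on $\{|g|=1\}$ while $T$ is supported in a compact set $C$, the whole singular set must lie in $C$. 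In particular $ds^2$ is a smooth positive definite metric of nonnegative curvature on $M-C$, and the ends are approached through $W$.

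I would next produce the conformal compactification. On $M-C$ one has $d\tau^2=ds^2$, hence $K_{d\tau^2}=K\ge 0$ there, while on the compact set $C$ the curvature of $d\tau^2$ is bounded. Consequently the negative part $\max(-K_{d\tau^2},0)$ is supported in $C$ and integrable, and $d\tau^2$ is complete. Huber's theorem then provides a compact Riemann surface $\overline M$ and finitely many points $p_1,\dots,p_n$ with $M$ biholomorphic to $\overline M-\{p_1,\dots,p_n\}$; in particular $\chi(M)=\chi(\overline M)-n$ is finite. Applying the Cohn--Vossen inequality to $d\tau^2$ gives $\int_M K_{d\tau^2}\,dA\le 2\pi\,\chi(M)<\infty$, and since the negative part is already integrable the total curvature of $d\tau^2$, and hence that of $ds^2$ on $M-C$, is finite. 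Thus in the maximal setting completeness already implies finite total curvature.

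With finite total curvature available I would localize at each puncture $p_i$, working on a punctured conformal disk $D^*$ on which $ds^2=d\tau^2$. Finiteness of the total curvature bounds the area, counted with multiplicity, of the image under the meromorphic Gauss map $g$; an essential singularity of $g$ at $p_i$ would, by the Casorati--Weierstrass/Picard behavior, make this area infinite, so $g$ extends meromorphically across $p_i$. Once $g$ is meromorphic, the expression \eqref{eq:metric} for $ds^2$ together with the finiteness of the curvature and the completeness at the end pins down the order of $\phi_3$ at $p_i$, whence $\phi_3$ (and with it $\phi_1,\phi_2$) also extends meromorphically to $\overline M$. Finally, to see that the limit normal is timelike, i.e. $|g(p_i)|\ne 1$, I would argue by contradiction: if $|g(p_i)|=1$ then by \eqref{eq:metric} the factor $\bigl(\tfrac{1}{|g|}-|g|\bigr)$ degenerates at the end, and a path tending to $p_i$ is seen from the local expansions of $g$ and $\phi_3$ to have finite $ds^2$-length; as this path eventually leaves $C$, where $d\tau^2=ds^2$, it has finite $d\tau^2$-length, contradicting completeness.

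The main obstacle is the punctual analysis at the ends: proving that finite total curvature genuinely precludes an essential singularity of $g$, and then extracting from the local expansions of $g$ and $\phi_3$ both the meromorphic extension of $\phi_3$ and the strict inequality $|g(p_i)|\ne 1$. This removable-singularities step is the Lorentzian counterpart of Osserman's lemma and carries the analytic weight of the proof; the topological reduction via Huber and Cohn--Vossen, by contrast, is comparatively soft once the sign $K\ge 0$ is in hand.
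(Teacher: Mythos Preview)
The paper does not supply a proof of this proposition; it is quoted from \cite{UY} and used as a black box, so there is no in-paper argument to compare your sketch against.

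Your outline follows the expected Osserman--Huber route and is broadly correct, but two points deserve comment. First, once each end is known to be a conformal punctured disk and the singular set is compact, the meromorphic extension of $g$ together with $|g(p_i)|\neq 1$ is cheaper than you indicate: near an end one has $|g|\neq 1$, so by connectedness either $|g|<1$ or $|g|>1$ on a full punctured neighborhood, whence $g$ (respectively $1/g$) is bounded and extends by Riemann's removable singularity theorem, and the maximum principle then forbids $|g(p_i)|=1$. No appeal to Picard or to the finiteness of the curvature is needed at that stage. Second, there is a genuine subtlety in applying Huber to $d\tau^2=ds^2+T$: nothing guarantees that $d\tau^2$ lies in the conformal class determined by the holomorphic structure of $M$ on the compact set $C$, so Huber a priori compactifies a possibly different conformal structure. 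This is ultimately harmless because the two structures agree outside $C$, hence each end is a punctured disk in the correct conformal class; but the cleaner way to avoid the issue---and the one taken in \cite{UY}---is to work directly with the everywhere positive conformal metric \eqref{eq:liftmetric}, show it is complete with curvature bounded above, and apply Huber to that.
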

By definition, the genus of $X$ is the genus of $\overline{M}$. 
The removed points $p_1,\ldots,p_n\in\overline{M}$ correspond to the 
{\em ends} of $X$ (note that no end is accumulation point of the singular set).

\begin{theorem}[{\cite[Theorem 4.6]{UY}}]
Complete maxfaces are of finite type. 
\end{theorem}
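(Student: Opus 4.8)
The plan is to invoke Proposition~\ref{pro:UY} to place ourselves on a punctured compact surface, and then reduce the finite total curvature assertion to a purely local computation at the ends. By Proposition~\ref{pro:UY} we may write $M=\overline{M}-\{p_1,\dots,p_n\}$ with $\overline{M}$ compact, the Weierstrass data $g,\phi_3$ meromorphic on $\overline{M}$, and $|g(p_i)|\neq1$ for every $i$ (the limit normal is timelike). The induced metric $ds^2$ degenerates exactly on the singular set $S=\{|g|=1\}$, which is compact in $M$ and has no end among its accumulation points, so the first step is to absorb this degeneracy into the tensor $T$ and then bound the curvature near each $p_i$, where $ds^2$ is already a genuine metric.

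First I would build $T$. I choose a compact set $C$ whose interior contains $S$, take a smooth cutoff $\rho$ equal to $1$ near $S$ and to $0$ outside $C$, and set $T=\rho\,(d\hat s^2-ds^2)$, where $d\hat s^2$ denotes the everywhere positive-definite lift metric \eqref{eq:liftmetric}. Then $d\tau^2:=ds^2+T=\rho\,d\hat s^2+(1-\rho)\,ds^2$ is a convex combination of positive-semidefinite forms which is in fact positive definite everywhere (it equals $d\hat s^2$ near $S$, and $ds^2>0$ off $S$), hence a smooth Riemannian metric on all of $M$. Since $C$ is compact, the Gaussian curvature of $d\tau^2$ is bounded on $C$ and the area of $C$ is finite, so $C$ contributes a finite amount to the total curvature. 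As $T$ is supported in $C$, it remains to control $\int_{M-C}|K|\,dA$, where $d\tau^2=ds^2$ and $M-C$ is a disjoint union of punctured-disc neighborhoods of the ends $p_i$.

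The heart of the matter is the curvature of $ds^2$ near an end. Writing $\phi_3=f\,dz$ in a punctured coordinate disc about $p_i$, shrunk so that the only zero or pole of $f$ and $g$ sits at $p_i$, the metric reads $ds^2=e^{2u}|dz|^2$ with $u=\log|f|+\log\bigl|1-|g|^2\bigr|-\log|g|-\log2$. On the punctured disc $\log|f|$ and $\log|g|$ are harmonic, while a direct computation gives $\partial_z\partial_{\bar z}\log\bigl|1-|g|^2\bigr|=-|g'|^2/(1-|g|^2)^2$. Hence, with $K=-e^{-2u}\Delta u$ and $dA=e^{2u}\,dx\,dy$,
\begin{equation*}
K\,dA=-\Delta u\,dx\,dy=\frac{4|g'|^2}{(1-|g|^2)^2}\,dx\,dy\ \ge\ 0,
\end{equation*}
which is exactly the pullback under $g$ of the area element of the Poincaré metric (valid in both the region $|g|<1$ and the region $|g|>1$). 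Thus the total curvature accumulated near $p_i$ equals the finite-or-infinite hyperbolic area swept out by $g$ near $p_i$.

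Finiteness then follows from the two facts supplied by Proposition~\ref{pro:UY}: $g$ is meromorphic at $p_i$ and $|g(p_i)|\neq1$. The integrand $4|g'|^2/(1-|g|^2)^2$ extends integrably across $p_i$: when $g(p_i)$ is finite the denominator stays bounded away from $0$ and $|g'|$ is bounded, while if $g$ has a pole of order $k$ at $p_i$ one checks that $4|g'|^2/(1-|g|^2)^2\sim c\,|z|^{2k-2}$, which is locally integrable for every $k\ge1$. Summing the finite contributions of the finitely many ends with the finite contribution of $C$ yields $\int_M|K_{\tau}|\,dA_{\tau}<\infty$, so $d\tau^2=ds^2+T$ has finite total curvature and $X$ is of finite type. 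The one point demanding care is precisely this last integrability estimate, which would break down were some $|g(p_i)|$ equal to $1$; it is the non-lightlike character of the limit normal at the ends, guaranteed by Proposition~\ref{pro:UY}, that makes it go through.
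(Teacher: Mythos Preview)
Your argument is sound. The paper itself does not prove this statement: it is quoted verbatim as \cite[Theorem~4.6]{UY} and used as a black box, so there is no proof in the paper to compare against. What you have written is a self-contained verification, and the steps are correct: the modification $T=\rho(d\hat s^2-ds^2)$ produces a smooth positive-definite metric since $d\hat s^2=|\phi_1|^2+|\phi_2|^2+|\phi_3|^2>0$ everywhere and $ds^2>0$ off the compact set $S$; on the compact piece $C$ the curvature integral is automatically finite; and on $M-C$ your identity $K\,dA=4|g'|^2(1-|g|^2)^{-2}\,dx\,dy$ (the pullback of the hyperbolic area form) together with the asymptotics at the ends gives a finite contribution. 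The key input, as you correctly isolate, is that Proposition~\ref{pro:UY} forces $|g(p_i)|\neq1$, which is exactly what makes the integrand extend integrably across each puncture.

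One small remark: you do not need to, and do not, check that your $d\tau^2$ is complete. The definition of ``finite type'' only asks for a compactly supported modification yielding finite total curvature; completeness of the modified metric is a separate condition, and the theorem asserts the implication in one direction only.
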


It is not hard to see that any complete maxface $X:M \to \L^3$ is eventually a finite multigraph over any spacelike plane. Indeed, consider a spacelike plane  
$\Sigma\subset \L^3$ and let $p:\L^3 \to \Sigma$ denote the Lorentzian orthogonal projection on $\Sigma.$ Then take a solid circular cylinder $C \subset \L^3$ 
orthogonal to $\Sigma$ and containing all of the singularities of $X(M).$ By basic topological arguments  $X^{-1}(C)$ is compact, and it is not hard to check that the map $p \circ X:M-X^{-1}(C) \to \Sigma-C$ is a proper local diffeomorphism (and so a covering) with finitely many sheets, proving our assertion. The converse is also true (see \cite{FLS,FL} for more details).

Let $\mu_i$ denote the  winding number (or multiplicity) of the multigraph $X$ around $p_i.$ It is not hard to check that $\mu_i=\max\{\mbox{Ord}_{p_i} (\phi_j),\; j=1,2,3\}-1,$ where $\mbox{Ord}_{p_i}(\phi_j)$ is the pole order of $\phi_j$ at $p_i$ (see, for instance \cite{FL}). The following Jorge-Meeks type formula and Osserman-type inequality will be useful:

\begin{theorem}[\cite{FL, UY}]
\label{th:oss-ineq}
If  $X:\overline{M} - \{p_1,\ldots,p_n\}\to\L^3$ is a complete 
maxface with meromorphic Gauss map $g,$ then $$2\deg g= - \chi(\overline{M}) + \sum_{i=1}^n(\mu_i +1),$$ where $\chi(\overline{M})$ denotes the Euler characteristic of 
$\overline{M}$.   In particular,
\begin{equation}\label{eq:oss-ineq}
2\deg g \geq - \chi(\overline{M}) + 2n.
\end{equation}

Moreover, the equality holds if and only if $X$ is an embedding around any end of $M.$  
\end{theorem}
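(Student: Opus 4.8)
The plan is to reduce everything to a single divisor count, following the Weierstrass-representation bookkeeping familiar from minimal surface theory. Set $\eta:=\phi_3/g$. From \eqref{eq:phi1phi2} together with the null identity $\phi_1^2+\phi_2^2-\phi_3^2=0$ one gets
$$\phi_1-i\phi_2=i\,\eta,\qquad \phi_3=g\,\eta,\qquad \phi_1+i\phi_2=-i\,g^2\eta .$$
On $M=\overline{M}-\{p_1,\dots,p_n\}$ the forms $\phi_1,\phi_2,\phi_3$ are holomorphic and, by the no-branch-point condition \eqref{eq:R}, never vanish simultaneously; hence $\eta,g\eta,g^2\eta$ are holomorphic with no common zero on $M$. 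A local inspection then pins down the divisor of $\eta$ on $M$: where $g$ is finite, $\eta$ cannot vanish (otherwise all three forms would), while at a pole of $g$ of order $m$ the demand that $g^2\eta$ be holomorphic and nonzero forces $\eta$ to vanish to order exactly $2m$. Thus, writing $P$ for the total pole order of $g$ on $M$, one has $\sum_{p\in M}\mathrm{ord}_p(\eta)=2P$.

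By Proposition~\ref{pro:UY}, $g$ and $\phi_3$, and therefore $\eta$, extend meromorphically to the compact surface $\overline{M}$, so the degree of the divisor of $\eta$ equals $2\,\mathrm{genus}(\overline{M})-2=-\chi(\overline{M})$. Setting $a_i:=\mathrm{ord}_{p_i}(\eta)$ and combining with the previous paragraph gives
$$\sum_{i=1}^{n}a_i=-\chi(\overline{M})-2P .$$

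The crux is the local model at each end $p_i$. Put $b_i:=\mathrm{ord}_{p_i}(g)$, so the three displayed forms have orders $a_i$, $a_i+b_i$, $a_i+2b_i$ at $p_i$. Since $\mu_i+1=\max_j\mathrm{Ord}_{p_i}(\phi_j)=-\min_j\mathrm{ord}_{p_i}(\phi_j)$ by the winding-number identity recalled before the theorem, I must identify this minimum. When $b_i\ne0$ the forms $\phi_1-i\phi_2$ and $\phi_1+i\phi_2$ have distinct orders, so no cancellation occurs in $\phi_1=\tfrac12\big((\phi_1-i\phi_2)+(\phi_1+i\phi_2)\big)$ and $\phi_2=\tfrac1{2i}\big((\phi_1+i\phi_2)-(\phi_1-i\phi_2)\big)$, and $\min_j\mathrm{ord}_{p_i}(\phi_j)=\min(a_i,a_i+2b_i)$; when $b_i=0$ the form $\phi_3$ has order $a_i$ and already realizes the minimum. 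Here the hypothesis that the limit normal be timelike, i.e.\ $|g(p_i)|\ne1$ (so that $p_i$ is not an accumulation point of the singular set), guarantees that we are genuinely in one of these cases. A short check on the sign of $b_i$ then gives, uniformly,
$$\mu_i+1=-a_i+2q_i,\qquad q_i:=\mathrm{Ord}_{p_i}(g)=\max(0,-b_i) .$$
Summing over $i$ and using the relation for $\sum_i a_i$ together with $\deg g=P+\sum_i q_i$ yields
$$\sum_{i=1}^{n}(\mu_i+1)=-\sum_{i=1}^n a_i+2\sum_{i=1}^n q_i=\chi(\overline{M})+2P+2\sum_{i=1}^n q_i=\chi(\overline{M})+2\deg g ,$$
which is the asserted Jorge-Meeks formula. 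I expect this end analysis to be the main obstacle, since the whole identity rests on correctly identifying $\min_j\mathrm{ord}_{p_i}(\phi_j)$ through the case split and on the role of $|g(p_i)|\ne1$ in keeping the ends off the singular locus.

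Finally, the Osserman-type inequality \eqref{eq:oss-ineq} is immediate: each $\mu_i\ge1$, being the winding number of a genuine multigraph, so $\sum_i(\mu_i+1)\ge 2n$ and $2\deg g\ge-\chi(\overline{M})+2n$. Equality holds precisely when $\mu_i=1$ for every $i$, i.e.\ when the multigraph around each end is single-sheeted; this is exactly the condition that $X$ be an embedding near every end.
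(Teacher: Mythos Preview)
The paper does not supply a proof of this theorem; it is quoted as a known result from \cite{FL,UY}. Your argument is correct and is essentially the standard Jorge--Meeks/Osserman divisor count: compute the degree of the canonical divisor via the auxiliary $1$-form $\eta=\phi_3/g$, identify its zero divisor on $M$ with twice the pole divisor of $g$, and read off the end contributions from the local orders of $\eta$, $g\eta$, $g^{2}\eta$.

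Two minor comments. First, your aside about $|g(p_i)|\ne 1$ is not actually needed for the case split: the two cases $b_i\ne 0$ and $b_i=0$ are exhaustive, and in the latter $\phi_3$ has order exactly $a_i$ regardless of the value $g(p_i)$, so the minimum is $a_i$ whether or not cancellation occurs in $\phi_1$ or $\phi_2$. The hypothesis $|g(p_i)|\ne 1$ (from Proposition~\ref{pro:UY}) is relevant upstream, to ensure the end is not an accumulation point of singularities so that the multigraph picture and the identity $\mu_i+1=\max_j\mathrm{Ord}_{p_i}(\phi_j)$ are available, but not for the divisor arithmetic itself. Second, you tacitly use that at least one $\phi_j$ has a genuine pole at each $p_i$; this follows from completeness (otherwise the metric would extend across $p_i$), so the identification $\mu_i+1=-\min_j\mathrm{ord}_{p_i}(\phi_j)$ is legitimate.
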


\section{Nonorientable maxfaces} 

Let $M'$ be a {\em nonorientable Riemann surface}, that is to say, a nonorientable surface endowed with
an atlas whose transition maps are holomorphic or antiholomorphic. Let $\pi: M\to M'$ denote the 
orientable conformal double cover of $M'$. 
\begin{definition}
 A conformal map $X':M'\to\L^3$ is said to be a {\em nonorientable maxface}  if the composition 
\[
X=X'\circ\pi : M\longrightarrow \L^3
\]
is a maxface. In addition,  $X'$ is said to be  complete if  $X$ is 
complete. 
\end{definition}

\begin{remark}
For any maxface $X:M\to\L^3$, 
regardless of whether $M$ is orientable or nonorientable, 
there exists a real analytic normal vector field which is well-defined on $M$. 
See Section 5 of \cite{KU} for more details.  
\end{remark}

Let $X':M'\to\L^3$ be a nonorientable maxface, and let $I:M \to M$ denote the antiholomorphic order two deck transformation 
associated to the orientable double cover $\pi :M\to M'.$ 
Since $X\circ I=X,$ then  $I^*(\phi_j)=\bar{\phi}_j$ ($j=1,2,3$), 
or equivalently, 
\begin{equation}\label{eq:gI-I(phi)}
g\circ I =\frac{1}{\bar{g}}
\quad\mbox{and}\quad
I^*(\phi_3)=\bar{\phi}_3.
\end{equation}
As a consequence, $I$ leaves invariant the singular set $\{p \in M\;;\; |g(p)|=1\}.$

Conversely, if $(g,\phi_3)$ is the Weierstrass data of a orientable 
maxface $X:M\to\L^3$ and $I$ is an antiholomorphic involution without 
fixed points in $M$ satisfying \eqref{eq:gI-I(phi)}, then the unique map $X':M'=M/\langle I\rangle\to\L^3$ satisfying that 
$X=X'\circ\pi$ is a nonorientable maxface. We call $(M,I,g,\phi_3)$ as the Weierstrass data of 
$X':M'\to\L^3$.

Assume that $X':M'=M/\langle I\rangle\to\L^3$ is complete. Then $I$ extends conformally to the compactification $\overline{M}$ of $M$ and $$M=\overline{M} - \{q_1,\ldots,q_m,I(q_1),\ldots,I(q_m)\},$$ where $q_1,\ldots,q_m \in \overline{M}.$ Consequently, $M'=\overline{M}' - \{\pi(q_1),\ldots,\pi(q_m)\},$ where $\overline{M}'=\overline{M}/\langle I \rangle$ is a compact 
nonorientable conformal surface of genus $2-\chi(\overline{M}')=2-(1/2) \chi(\overline{M}).$ By definition, the genus of $X'$ is the genus of $M'.$ 

\subsection{Topological congruence formulae for  nonorientable maxfaces}\label{sec:gaussmap}

Let $X':M'\to\L^3$ be a complete nonorientable maxface with   
Weierstrass data $(M,I,g,\phi_3),$ and label as $\pi:M \to M'$ as the orientable double cover of $M'.$ Denote by $A:\overline{\C} \to \overline{\C}$  the complex conjugation $A(z)=1/\overline{z},$ and consider the projection $p_0:\overline{\C} \to \overline{\D} \equiv \overline{\C}/\langle A \rangle.$   
\begin{definition}
The unique conformal map $\hat{g}:M' \to \overline{\C}/\langle A\rangle$ satisfying that  $\hat{g} \circ \pi=p_0 \circ g$ is said to be the Gauss map of $X'.$ 
\end{definition}

By Proposition \ref{pro:UY}, if $X'$ is complete then $\hat{g}$ extends conformally to the compatification $\overline{M}'$ of $M'.$ Moreover, $\hat{g}$ has the same degree as $g:\overline{M}\to \overline{\C}.$  The  Jorge-Meeks type formula in Theorem \ref{th:oss-ineq} gives 
$$\deg \hat{g} = - \chi(\overline{M}') + \sum_{i=1}^m(\mu_i+1),$$ where $\mu_i$ is the multiplicity of $X$ at $q_i,$ hence the inequality (\ref{eq:oss-ineq}) becomes: 
\begin{equation} \label{eq:oss-ineq-non}
\deg \hat{g} \geq - \chi(\overline{M}') + 2m,
\end{equation}
where $m$ is the number of ends of $M'.$

\begin{theorem}\label{th:deg(g)}
If $X'$ is complete then the degree of $\hat{g}$ is even.  
\end{theorem}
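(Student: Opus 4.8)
The plan is to reduce the statement to the meromorphic Gauss map $g$ of the orientable double cover and to exploit the equivariance recorded in \eqref{eq:gI-I(phi)}. Since the paragraph preceding the theorem already establishes $\deg\hat{g}=\deg g$, it suffices to prove that $d:=\deg g$ is even. By Proposition \ref{pro:UY} the map $g$ extends to a meromorphic function on the compact Riemann surface $\overline{M}$, hence to a branched covering $g:\overline{M}\to\overline{\C}$ of degree $d$; in particular $g$ has only finitely many critical values.

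The decisive observation is that one should evaluate $g$ not at a generic point but on the fixed-point set of $A$. Rewriting \eqref{eq:gI-I(phi)} as $g\circ I=A\circ g$, where $A(z)=1/\bar{z}$, I note that every $w_0$ with $|w_0|=1$ satisfies $A(w_0)=w_0$. First I would choose such a $w_0$ on the circle $\{|z|=1\}$ that is a regular value of $g$; this is possible because $g$ has only finitely many critical values while the circle is infinite. For this choice the fibre $g^{-1}(w_0)$ consists of exactly $d$ distinct points.

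Next I would use the equivariance to write $I\bigl(g^{-1}(w_0)\bigr)=g^{-1}(A(w_0))=g^{-1}(w_0)$, so that the order-two deck transformation $I$ restricts to an involution of the $d$-element set $g^{-1}(w_0)$. Since $I$ has no fixed points on $M$, and extends fixed-point-freely to $\overline{M}$ because it merely permutes the ends in pairs $q_i\leftrightarrow I(q_i)$, this restricted involution is free; hence the set $g^{-1}(w_0)$ splits into two-point orbits, which forces $d$ to be even. Combining this with $\deg\hat{g}=\deg g$ gives the claim.

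The argument is short, and its only genuinely delicate point is the choice of evaluation locus. At a generic $w$ with $|w|\ne 1$ the symmetry produces merely a bijection between the two \emph{disjoint} fibres $g^{-1}(w)$ and $g^{-1}(A(w))$ and yields no parity information, so the whole force of the proof lies in testing $g$ along the fixed circle of $A$, where a single fibre is preserved and $I$ acts freely on it. The remaining routine checks are that a regular value indeed exists on the circle and that $I$ is genuinely fixed-point-free on all of $\overline{M}$.
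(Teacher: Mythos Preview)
Your proof is correct and considerably more direct than the paper's. The paper argues by invoking Meeks' parity theorem for branched covers (Fact~\ref{fc:Me}): it first takes an auxiliary meromorphic function $h$ on $\overline{M}$ with the ``odd'' equivariance $h\circ I=-1/\bar h$, so that $h$ descends to a map $\hat h:\overline{M}'\to\R\Bbb{P}^2$ and Meeks' result gives $\deg h\equiv\chi(\overline{M}')\pmod 2$; it then forms the product $G=gh$ (after adjusting $h$ so that no zero or pole of $g$ meets one of $h$), observes that $G$ inherits the same odd equivariance, applies Meeks again to get $\deg G\equiv\chi(\overline{M}')\pmod 2$, and subtracts to conclude $\deg g\equiv 0\pmod 2$. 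Your argument bypasses both Meeks' theorem and the auxiliary $h$ by exploiting the fixed circle of $A$: evaluating $g$ over a regular value on $\{|z|=1\}$ produces a single fibre on which $I$ acts as a free involution, forcing even cardinality. This is shorter and self-contained, using only the equivariance $g\circ I=A\circ g$ at hand; the paper's route, by contrast, embeds the result in a general topological framework and would adapt to other equivariances.

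One small refinement: rather than arguing that $I$ extends fixed-point-freely to the ends, you can note directly that $g^{-1}(w_0)\subset M$, since Proposition~\ref{pro:UY} guarantees $|g|\ne 1$ at each end while $|w_0|=1$; the fixed-point-freeness of $I$ on $M$ alone then suffices. (Your claim about $\overline{M}$ is also true---an antiholomorphic involution cannot have an isolated fixed point---but it is not needed here.)
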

\begin{proof} Let $X':M'\to\L^3$ be a complete nonorientable maxface with the 
Weierstrass data $(M,I,g,\phi_3)$. 
As in the previous section, let $\overline{M}$ and $\overline{M}'$ be 
the compactifications of $M$ and $M'$, respectively.

Consider a meromorphic function $h$ on $\overline{M}$ such that $h\circ I=-1/\bar{h}$ (the existence of this kind of functions
is well known, see \cite{R}), and call $\hat{h} :\overline{M}'\to\R\Bbb{P}^2$ as the unique conformal map making the following
diagram commutative:
\[
\begin{CD}
\overline{M} @> h >> \overline{\C} \\
@V \pi VV  @VV \pi_0 V \\
\overline{M}' @> \hat{h} >> \R\Bbb{P}^2
\end{CD}
\]
Here $\R\Bbb{P}^2=\overline{\C}/I_0,$ where  $I_0(z)=-1/\bar z$ is the antipodal map, and $\pi_0:\overline{\C}\to\R\Bbb{P}^2=\overline{\C}/I_0$ 
is the natural projection. 
Since $\deg \pi = \deg \pi_0 =2$, the degree of $\hat{h}$ is  well-defined, and as a matter of fact $\deg \hat{h} = \deg h$.  

On the other hand, Meeks \cite[Theorem 1]{Me} proved the following fact:

\begin{fact}[{\cite[Theorem 1]{Me}}]\label{fc:Me}
Let $M_1$ and $M_2$ be compact surfaces without boundary and let 
$f:M_1\to M_2$ be a branched cover of $M_2$.  
If $\chi (M_2)$ is odd, then $\chi (M_1)$ and $\deg f$ are either both 
even or both odd.  If $\chi (M_2)$ is even, then $\chi (M_1)$ is even.  
\end{fact}
Therefore, we deduce that $\deg h = \deg \hat{h} \equiv \chi (\overline{M}') \pmod{2}.$ 

Up to composing $h$ with a suitable M\"{o}bius transformation of the form $L(z)=(z+a)/(\overline{a} z-1),$ we can suppose that $h(p) \neq 0,\infty$ for all
zero or pole $p$ of $g.$ Thus the meromorphic function $G:\overline{M}\to \overline{\C}$ defined by $G(z)=g(z)h(z)$ has 
\[
\deg G = \deg (gh) = \deg g + \deg h.
\]
Since $
G\circ I = (g\cdot h)\circ I = (g\circ I)(h\circ I)
         = \left(1/\bar g\right)\left(-1/\bar h\right) 
         = -1/\bar G,$ Meeks result gives that $\deg G \equiv \chi (\overline{M}') \pmod{2},$ and so $\deg(\hat{g})=\deg g\equiv 0 \pmod{2}$, proving the theorem. 
\end{proof}

\begin{corollary}\label{co:evengenus}
Let $X':M'\to\L^3$ be a complete nonorientable maxface with embedded ends.  
Then $X'$ has even genus.
\end{corollary}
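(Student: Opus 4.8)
The plan is to derive the result directly from the parity statement just established in Theorem~\ref{th:deg(g)}, combined with the nonorientable Jorge--Meeks type formula that precedes \eqref{eq:oss-ineq-non}. Since the genus of $X'$ equals $2-\chi(\overline{M}')$, it suffices to prove that $\chi(\overline{M}')$ is even.

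First I would record what the embedded-ends hypothesis supplies. By the characterization of embeddedness (the equality case in Theorem~\ref{th:oss-ineq}, transcribed to the nonorientable setting as equality in \eqref{eq:oss-ineq-non}), having embedded ends means precisely that $\mu_i=1$ for every $i$, so that $\sum_{i=1}^m(\mu_i+1)=2m$. Substituting this into the formula
\[
\deg\hat{g}=-\chi(\overline{M}')+\sum_{i=1}^m(\mu_i+1)
\]
yields $\deg\hat{g}=-\chi(\overline{M}')+2m$.

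Next I would invoke Theorem~\ref{th:deg(g)}, which asserts that $\deg\hat{g}$ is even. Since $2m$ is visibly even, the identity $\deg\hat{g}=-\chi(\overline{M}')+2m$ forces $\chi(\overline{M}')$ to be even as well. Because the genus of $X'$ is $2-\chi(\overline{M}')$ and $2$ is even, the genus is even, which is the claim.

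The real content of this corollary lies entirely in Theorem~\ref{th:deg(g)}: once the evenness of $\deg\hat{g}$ is in hand, the argument reduces to a one-line parity computation. Consequently I expect no genuine obstacle here; the only point deserving care is the correct bookkeeping of the embeddedness condition, namely that embedded ends correspond exactly to $\mu_i=1$, so that the contributions $\mu_i+1$ are each even and hence cannot disturb the parity of $\chi(\overline{M}')$.
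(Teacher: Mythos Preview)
Your proof is correct and follows essentially the same approach as the paper: combine the Jorge--Meeks type formula under the embedded-ends hypothesis with the evenness of $\deg\hat g$ from Theorem~\ref{th:deg(g)} to force the Euler characteristic to have the right parity. The only cosmetic difference is that the paper carries out the computation on the orientable double cover, obtaining $2\deg g=-\chi(\overline{M})+4m$ and hence $\chi(\overline{M})\equiv 0\pmod 4$, whereas you work directly with the nonorientable formula to get $\chi(\overline{M}')$ even; since $\chi(\overline{M})=2\chi(\overline{M}')$, the two computations are equivalent.
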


\begin{proof}
Let $(M,I,g,\phi_3)$ be the Weierstrass data of $X':M'\to\L^3,$ and write $M=\overline{M} - \{q_1,\ldots ,q_m,I(q_1),\ldots ,I(q_m)\}.$  Since the ends are embedded,  Theorem~\ref{th:oss-ineq} gives that $2 \deg g = -\chi (\overline{M}) + 2\cdot (2m),$
hence $\chi (\overline{M})\equiv 0 \pmod{4}$ by Theorem~\ref{th:deg(g)}, which completes 
the proof.
\end{proof}

\begin{corollary}\label{co:deggeq4}
Let $X':M'\to\L^3$ be a complete nonorientable maxface.
Then the Gauss map of $X'$ has degree greater than or equal to $4.$
\end{corollary}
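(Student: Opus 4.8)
The plan is to prove that $\deg\hat g$ is neither $0$ nor $2$; since Theorem~\ref{th:deg(g)} already guarantees it is even, this forces $\deg\hat g\ge 4$. First I would dispose of the trivial bound: because $X'$ is an immersion its Gauss map is nonconstant, so $\deg\hat g=\deg g\ge 1$, and Theorem~\ref{th:deg(g)} upgrades this to $\deg\hat g\ge 2$. It therefore suffices to rule out the value $\deg\hat g=2$, and the whole argument is a proof by contradiction starting from that hypothesis.

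Second, I would enumerate the admissible topologies when $\deg\hat g=2$. Writing the Jorge--Meeks formula on the orientable double cover as $2\deg g=-\chi(\overline M)+\sum_i(\mu_i+1)$ (Theorem~\ref{th:oss-ineq}), and using that $\chi(\overline M)=2\chi(\overline M')$ with $\chi(\overline M')\le 1$ (as $\overline M'$ is closed nonorientable), that $m\ge 1$ (a complete maxface is noncompact, hence has at least one end), and that every $\mu_i\ge 1$, a short counting argument leaves exactly two possibilities: (a) $\overline M'=\R\Bbb{P}^2$, $\overline M=\overline{\C}$, one end ($m=1$) of multiplicity $\mu_1=2$; or (b) $\overline M'$ a Klein bottle, $\overline M$ a torus, one embedded end ($m=1$, $\mu_1=1$). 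Note that case (b) saturates the Osserman inequality \eqref{eq:oss-ineq-non}, so it is not excluded by that inequality and must be attacked directly.

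Third, I would rule out case (a). Using an $I$-commuting Möbius transformation (rotations of the sphere commute with the antipodal map) I may normalize the two ends of $\overline M=\overline{\C}$ to $0$ and $\infty$, so that $I(z)=-1/\bar z$ and $M=\overline{\C}-\{0,\infty\}$, whose first homology is generated by a single loop $\gamma$ about $0$. Writing $\phi_3=f\,dz$, the symmetry relation \eqref{eq:gI-I(phi)} translates into an explicit reality condition on the Laurent coefficients of $f$; combined with the requirement that the winding number at each end be $2$ (a prescribed pole order for $\phi_1,\phi_2,\phi_3$), this pins down the polar part of $\phi_3$ and makes $\operatorname{Res}_0(\phi_3)$ purely imaginary. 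On the other hand the period conditions \eqref{eq:period12}--\eqref{eq:period3} over $\gamma$ impose the complementary reality constraints; in particular \eqref{eq:period3} forces $\Re\oint_\gamma\phi_3=0$, i.e.\ $\operatorname{Res}_0(\phi_3)$ real. The two constraints are incompatible unless $\phi_3\equiv 0$, which is absurd.

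Fourth, I would rule out case (b), the Klein bottle, and I expect this to be the main obstacle. Here $\overline M=\C/\Lambda$ carries a fixed-point-free antiholomorphic involution $I$, the degree-two Gauss map satisfies $g\circ I=1/\bar g$, and $\phi_3$ is a meromorphic one-form with at worst double poles at the two ends $q_1,I(q_1)$. The difficulty is that $H_1(M,\Z)$ now has rank three rather than one, and the Weierstrass pair $(g,\phi_3)$ is far less rigid than on the sphere. The strategy is to combine the residue theorem on the torus, the equivariance relations \eqref{eq:gI-I(phi)}, and the full period system \eqref{eq:period12}--\eqref{eq:period3}, exploiting the $I$-symmetry to cut this system down: as in case (a), the symmetry and the closing-up conditions impose complementary reality constraints on the periods that cannot be met by nonzero data. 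Carrying out this reduction carefully---showing that no degree-two $g$ on the torus admits a one-form $\phi_3$ satisfying all period conditions---is the crux of the proof, whereas case (a) is essentially a one-variable residue computation once the ends are normalized.
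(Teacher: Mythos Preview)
Your overall case division (genus one versus genus two for $\overline{M}'$) matches the paper's, and the reduction to ``$\deg\hat g=2$ is impossible'' is correct. But both cases in your proposal have genuine gaps.

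\textbf{Case (a), the projective plane.} Your residue argument does not close. The $I$-equivariance indeed forces $\operatorname{Res}_0(\phi_3)$ to be purely imaginary, and \eqref{eq:period3} forces it to be real, so $\operatorname{Res}_0(\phi_3)=0$. But this does \emph{not} imply $\phi_3\equiv 0$; it only kills one Laurent coefficient. Concretely, after normalizing one finds (up to rotation about the $x_3$-axis) $g=z(z-r)/(rz+1)$ with $r\in\R$ and, from \eqref{eq:liftmetric} together with $I^*\phi_3=\bar\phi_3$, $\phi_3=is(rz+1)(z-r)z^{-2}\,dz$ with $s\in\R\setminus\{0\}$. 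Your argument yields only $r^2=1$, which is perfectly consistent. The actual contradiction comes from \eqref{eq:period12}: one computes $\oint_\gamma g\phi_3+\overline{\oint_\gamma\phi_3/g}=-4\pi s r^2\neq 0$. So you must use the $\phi_1,\phi_2$ periods, not just the $\phi_3$ period; the paper does exactly this.

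\textbf{Case (b), the Klein bottle.} Here you only sketch a strategy (``combine the residue theorem, equivariance, and the full period system'') and explicitly flag it as the crux without carrying it out. That is not a proof, and on a torus with $H_1$ of rank two plus two punctures the bookkeeping you propose is substantial. The paper bypasses all of this with a short maximum-principle argument: with $\deg\hat g=2$ and $\chi(\overline{M}')=0$, the Osserman-type inequality \eqref{eq:oss-ineq-non} forces a single embedded end, which (after a Lorentzian rotation) is asymptotic either to a horizontal plane or to an upward half catenoid. In the first case $x_3$ is bounded on the parabolic surface $M$, hence constant; in the second case $x_3$ attains an interior minimum. Either way one contradicts the maximum principle for the harmonic function $x_3$. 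This is both shorter and conceptually cleaner than a direct attack on the period system, and it is the missing idea in your proposal.
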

\begin{proof} Label $(M,I,g,\phi_3)$ as the Weierstrass data of $X'.$ 

If $X'$ has genus greater than  two, the corollary follows straightforwardly from equation (\ref{eq:oss-ineq-non}) and Theorem~\ref{th:deg(g)}.

Assume that $X'$ has genus two, and reasoning by contradiction suppose that  $\deg(\hat{g})=2.$  By  equation (\ref{eq:oss-ineq-non}) and Theorem~\ref{th:deg(g)}, $X'$ has an unique embedded end. Furthermore, up to Lorentzian isometries we may assume that $X'$ is asymptotic at infinity to either a horizontal plane or a horizontal upward half catenoid. In the first case, the third coordinate function of $X'$ is bounded, hence constant by the maximum principle (recall that the double cover $M$ is parabolic), which is absurd. In the second case, the third coordinate function of $X'$ has an interior minimum, contradicting the maximum principle for harmonic functions as well.

Finally, suppose that $X'$ has genus one, and as above suppose $\deg(\hat{g})=2.$ Up to a conformal transformation, we may assume that 
$M = \C - \{0\}$ and $I(z)=-1/\bar{z}$. Up to a suitable Lorentzian rotation, we will also assume $g(0)=0$ and $g(\infty)=\infty$.  Moreover, recall that $g$ and $\phi_3$ satisfy 
\eqref{eq:gI-I(phi)} and \eqref{eq:liftmetric} on $M.$  Since $g\circ I=1/\bar g,$  up to a suitable conformal transformation and rotation around the $x_3$-axis, we have that $g=z(z-r)/(r z+1),$ $r \in \R.$ By equation \eqref{eq:liftmetric} and the condition $I^*\phi_3=\bar{\phi}_3,$ we get that $\phi_3 = i s(rz+1)(z-r)z^{-2}dz,$ $s \in \R-\{0\}.$ 
A direct computation shows that \eqref{eq:period12} does not hold 
for a loop around $z=0,$ completing the proof.
\end{proof}

\begin{remark}
A similar result does not hold in the orientable case. The Lorentzian catenoid is a complete maxface of genus zero  and has degree one Gauss map. Moreover, there exist complete orientable one-ended genus one maxface with degree two Gauss map (see \cite{UY}), and complete orientable two-ended genus one maxface with degree two Gauss map (see \cite{KY}). 
\end{remark}

Theorem A in the introduction follows from Theorem \ref{th:deg(g)} and 
Corollaries \ref{co:evengenus} and \ref{co:deggeq4}.  

\section{Maximal M\"{o}bius strips with low degree Gauss map}
\label{sec:moebius}

This section is devoted to describe the family of one-ended genus one nonorientable complete maxfaces with degree four Gauss map.  

 Let $X':M' \to\L^3$ be a complete  maxface with the topological type of a M\"{o}bius strip. Without loss of generality we can write $M'=\R\Bbb{P}^2  -  \{\pi_0(0)\},$ where $\pi_0:\overline{\C}\to \R\Bbb{P}^2=\overline{\C}/\langle I_0 \rangle$ is the conformal universal cover and $I_0(z)=-1/\bar z.$   Call $(M=\C -  \{0\},I_0,g,\phi_3)$ as the Weierstrass data of $X',$ where $g$ is a meromorphic function of even degree (see Theorem \ref{th:deg(g)}). We are going to deal only with  the simplest case $\deg g=4.$ Up to  a suitable Lorentzian rotation, we will assume that $g(0)=0$ and $g(\infty)=\infty$.

\begin{lemma}\label{lm:branch(g)0}
In the above setting, the branching number of $g$ at $0$ and $\infty$ is even. 
\end{lemma}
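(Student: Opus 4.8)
The plan is to prove that the order $k$ of the zero of $g$ at $0$ is \emph{odd}; since the branching number of $g$ at $0$ equals $k-1$, this is exactly the assertion. First I would record the symmetry. Writing $g(z)=cz^{k}+\cdots$ near $0$ and using $g\circ I_0=1/\bar g$ with $I_0(z)=-1/\bar z$ (which interchanges $0$ and $\infty$), a direct substitution gives $g(w)=\bar c^{\,-1}(-1)^{k}w^{k}+\cdots$ as $w\to\infty$, so $g$ has a pole of order $k$ at $\infty$; hence the branching numbers at the two ends coincide and it suffices to treat $0$. Because $\deg g=4$, the only even values to exclude are $k=2$ and $k=4$.

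The engine of the argument is an analytic constraint coming from the period conditions. Let $\gamma$ generate $H_1(M,\Z)=\Z$ as a small positively oriented loop about $0$; then $I_0\circ\gamma$ is a large loop whose only enclosed singularity, for each of the forms $\phi_3,\ g\phi_3,\ \phi_3/g$ (all holomorphic on $M$ by regularity), is the pole at $0$. Using $I_0^{*}\phi_3=\bar\phi_3$, $I_0^{*}(g\phi_3)=\overline{\phi_3/g}$, $I_0^{*}(\phi_3/g)=\overline{g\phi_3}$, together with $\int_{I_0\gamma}\omega=\int_\gamma I_0^{*}\omega$ and $\int_\gamma\bar\omega=\overline{\int_\gamma\omega}$, I would compare each period over $\gamma$ with its conjugate and combine this with \eqref{eq:period12} and \eqref{eq:period3}. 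This forces
\[
\operatorname{Res}_{0}\phi_3=\operatorname{Res}_{0}(g\phi_3)=\operatorname{Res}_{0}(\phi_3/g)=0 .
\]

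Next I would pin down the shape of the data and read off a contradiction. By the regularity condition \eqref{eq:liftmetric}, $\phi_3$ must vanish to order exactly $a_i$ at each zero $\alpha_i$ of $g$ and to order $b_j$ at each pole $\beta_j$ of $g$ on $M$, and be nonvanishing elsewhere; since $\sum a_i=\sum b_j=4-k$ and the poles of $\phi_3$ lie only at the ends, this gives $\phi_3=a\,\dfrac{\prod_i(z-\alpha_i)^{a_i}\prod_j(z-\beta_j)^{b_j}}{z^{m}}\,dz$. Equating the pole orders of $\phi_3$ at $0$ and $\infty$ (as forced by $I_0^{*}\phi_3=\bar\phi_3$) yields $m=5-k$, after which the $\alpha_i$-factors cancel in $\phi_3/g$ and the $\beta_j$-factors cancel in $g\phi_3$, leaving
\[
\frac{\phi_3}{g}=\frac{a}{c}\,\frac{\prod_j(z-\beta_j)^{2b_j}}{z^{5}}\,dz,\qquad
g\phi_3=ac\,\frac{\prod_i(z-\alpha_i)^{2a_i}}{z^{\,5-2k}}\,dz .
\]
For $k=2$ the numerator $\prod_j(z-\beta_j)^{2b_j}$ is monic of degree $8-2k=4$, so $\operatorname{Res}_0(\phi_3/g)=a/c\neq0$; for $k=4$ one has $g=cz^{4}$ and $\phi_3=a\,dz/z$, whence $\operatorname{Res}_0\phi_3=a\neq0$. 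In each case a nonzero residue contradicts the vanishing above, so $k\in\{1,3\}$ and the branching number $k-1$ is even.

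The step I expect to be the main obstacle is the residue vanishing: it requires careful orientation bookkeeping for the orientation-reversing map $I_0$ (so that $I_0\circ\gamma$ is correctly identified as a large \emph{positively} oriented loop and the conjugation identities for the periods hold with the right signs), and it relies essentially on $g\phi_3$ and $\phi_3/g$ being holomorphic on $M$, so that the loop $I_0\circ\gamma$ encloses no singularity other than the one at $0$. Once that is secured, the remaining algebra — obtaining $m=5-k$ and isolating which of the three residues is the offending nonzero leading coefficient — is routine.
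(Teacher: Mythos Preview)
Your argument is correct. The paper's proof follows the same overall strategy---rule out vanishing orders $k=2$ and $k=4$ by showing a period condition fails---but the executions differ. For $k=4$ the two proofs coincide (both reduce to $\phi_3=\text{const}\cdot dz/z$ and observe its period does not vanish). For $k=2$ the paper normalizes explicitly to
\[
g=z^{2}\frac{(rz-1)(sz-1)}{(z+\bar r)(z+\bar s)},\qquad
\phi_3=i\,\frac{(rz-1)(z+\bar r)(sz-1)(z+\bar s)}{z^{3}}\,dz,
\]
and then states that ``a direct computation shows that \eqref{eq:period12} does not hold.'' Your route is more structural: you first use $(I_0)_*\gamma=\gamma$ together with $I_0^*(g\phi_3)=\overline{\phi_3/g}$ to upgrade the period conditions to the clean statement that \emph{all three} residues at $0$ vanish, and then read off the contradiction as the nonvanishing leading coefficient of $\prod_j(z-\beta_j)^{2b_j}$ in $\phi_3/g$. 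This avoids any explicit normalization or computation and makes transparent exactly which coefficient obstructs the period condition; the paper's hands-on version, on the other hand, feeds directly into the subsequent classification where those explicit coordinates are needed anyway.

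The orientation bookkeeping you flag is indeed the only delicate point, and it goes through: writing $\gamma(t)=re^{2\pi i t}$ one has $I_0\circ\gamma(t)=-(1/r)e^{2\pi i t}$, which winds once counterclockwise about $0$, so $(I_0)_*\gamma=\gamma$ in $H_1(\C-\{0\},\Z)$ and the conjugation identities you use hold with the correct signs.
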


\begin{proof}
Suppose that $g$ has a branch point of order three at $z=0$. After a rotation around the $x_3$-axis, we have that $g=z^4$ (recall that $g\circ I=1/\bar g$). 
Since $g$ has neither zeros nor poles on $M$, the same holds for $\phi_3$ by  \eqref{eq:liftmetric}. Taking into account that  $I^*\phi_3=\bar{\phi}_3$, we infer that $\phi_3=idz/z,$ contradicting that $\phi_3$ has no real periods on $\C-\{0\}.$  

Assume now that $g$ has a branch point of order one at $z=0$. In this case and after a rotation around the $x_3$-axis, we can put 
$$g=z^2\frac{(rz-1)(sz-1)}{(z+\bar{r})(z+\bar{s})}$$  
for some constants $r, s\in\C - \{0\}$, and  so by \eqref{eq:gI-I(phi)} and \eqref{eq:liftmetric} 
$$\phi_3 = i\frac{(rz-1)(z+\bar r)(sz-1)(z+\bar s)}{z^3}dz.$$ 
A direct computation shows that \eqref{eq:period12} does not hold 
for a loop around $z=0$, proving the Lemma.
\end{proof}

Suppose now that $g$ has a branch point of order two at $z=0$. Up to conformal transformations in $\C -  \{0\}$ and rotations around the $x_3$-axis, we may set $g=z^3(rz-1)/(z+r)$
for some real positive constant $r$.  Reasoning as in the proof of Corollary~\ref{co:deggeq4}, we get $\phi_3 = i(rz-1)(z+r)z^{-2}dz.$
Obviously $g\phi_3$ and $\phi_3/g$ have no residues at 
the ends, hence $\phi_1$ and $\phi_2$ have no real periods on $\C-\{0\}.$  Moreover, $\phi_3$ has no real periods if and only if $\int_\gamma \phi_3  = -2\pi (r^2-1)=0$
for any loop $\gamma$ winding once around $z=0,$ and so $r=1.$ 

Clearly $X$ is complete and its singular set is compact. 
Therefore, it induces a complete nonorientable maxface $X':\R\Bbb{P}^2  -  \{\pi(0)\} \to \L^3$. See the left-hand side of Figure~\ref{fg:max-moeb}. 

\begin{remark}
For each $k \in \N,$ the data $g=z^{2k+1}(z+1)/(z-1),$ $\phi_3 = i(z^2-1)z^{-2}dz$ on $\C - \{0\}$ determine a complete nonorientable maxface 
$X':\R\Bbb{P}^2  -  \{\pi_0(0)\}\to\L^3$ with $\deg g = 2k+2$.  
\end{remark}

\begin{remark}
If we set $g=z^2$ and $\phi_3 = i(z^2-1)z^{-2}dz$  on $\C -  \{0\},$ we obtain a Henneberg-type maximal immersion $X':\R\Bbb{P}^2  -  \{\pi_0(0)\}\to\L^3$ with 
singularities (see \cite{ACM}). This $X'$ is complete and has branch points at $z=\pm 1,$  so it is not a maxface. 
See Figure~\ref{fg:max-henn}. 
\end{remark}

\begin{figure}[htbp]
\begin{center}
 \includegraphics[width=.40\linewidth]{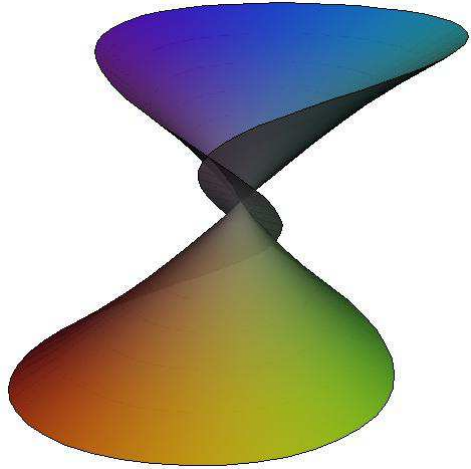} 
\end{center}
\caption{Henneberg-type maximal surface.}
\label{fg:max-henn}
\end{figure}

Assume now that $g$ has no branch points at the ends.  
As before, up to changes of coordinates and  rotations around the 
$x_3$-axis, we may set 
\[
g=z\frac{(rz-1)(sz-1)(tz-1)}{(z+r)(z+\bar{s})(z+\bar{t})}
\]
and 
\[
\phi_3 = i\frac{(rz-1)(z+r)(sz-1)(z+\bar s)(tz-1)(z+\bar t)}{z^4}dz 
\]
for some positive real constant $r$ and constants $s,t\in\C - \{0\}$. 
Take a loop $\gamma$ around $z=0$. 
Then direct calculation gives that
\begin{align*}
\int_\gamma g\phi_3+\overline{\int_\gamma \frac{\phi_3}{g}}
&= -4\pi \left(r^2+s^2+t^2+4rs+4st+4tr\right), \\
\frac{1}{2\pi} \int_\gamma \phi_3
&= (r^2-1)\left\{(|s|^2-1)(|t|^2-1)-s\bar t-\bar s t\right\} \\
& \qquad -r\left\{(|s|^2-1)(t+\bar t)+(|t|^2-1)(s+\bar s)\right\}.
\end{align*}
The arising moduli space of maxfaces is parameterized by the real analytic set of solutions of this system. For instance, the choice $r=1$, $s=e^{2\pi i/3}$ and $t=e^{-2\pi i/3}$ provides 
a surface in this family with high symmetry.  See the right-hand side of Figure~\ref{fg:max-moeb}.

\begin{figure}[htbp]
\begin{center}
\begin{tabular}{cc}
 \includegraphics[width=.40\linewidth]{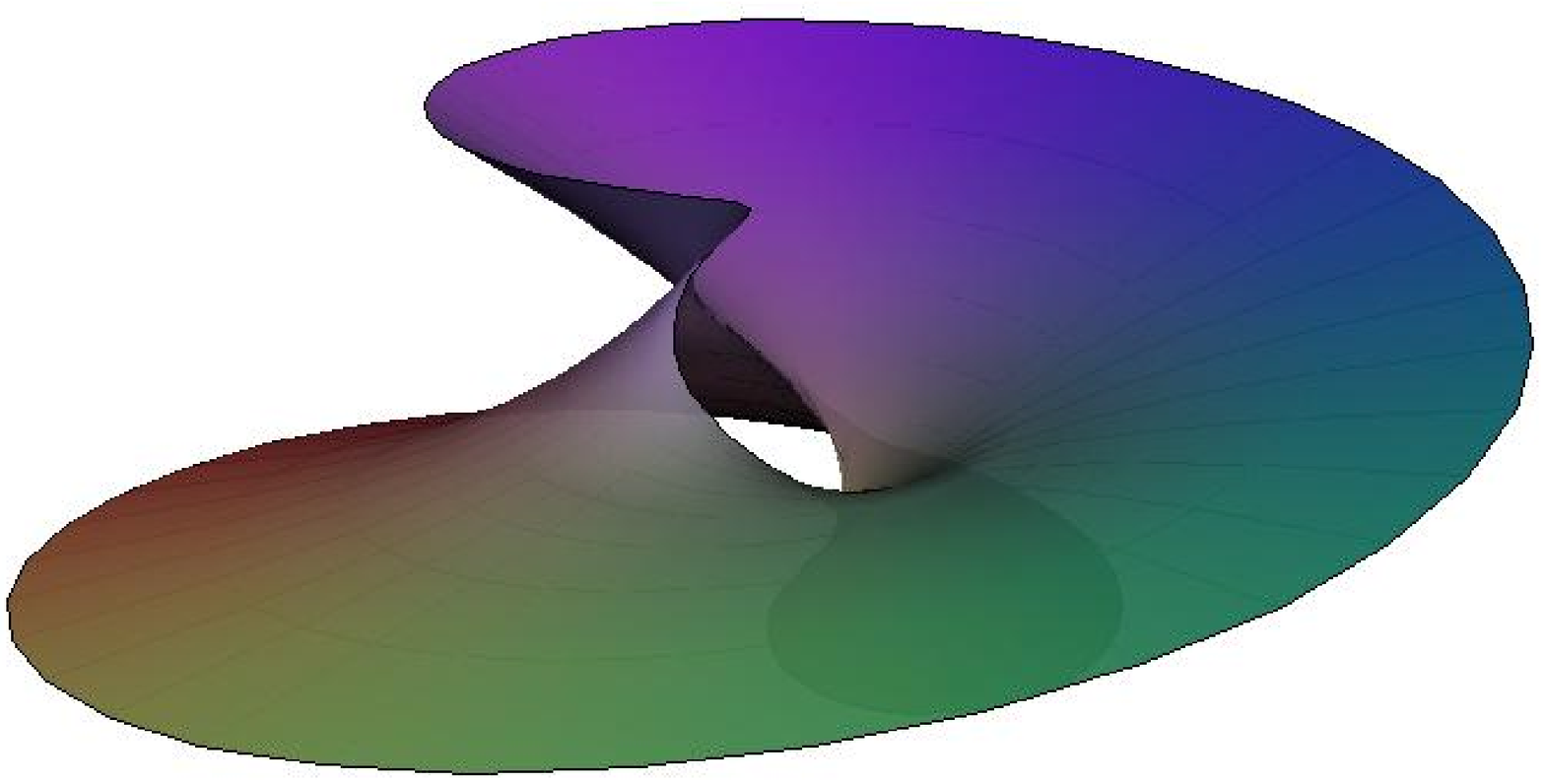} & 
 \includegraphics[width=.40\linewidth]{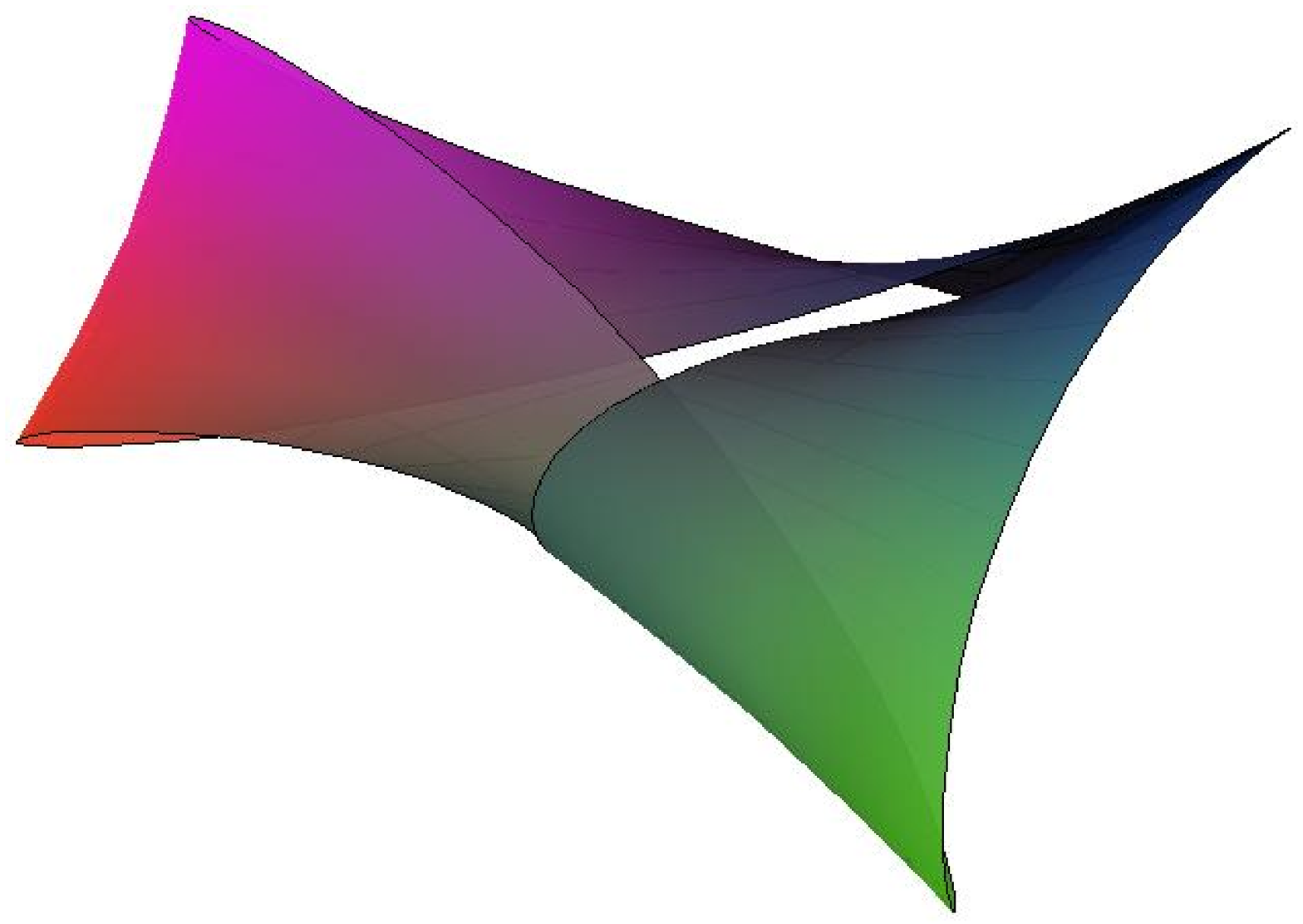} 
\end{tabular}
\end{center}
\caption{Maximal M\"{o}bius strips.  
         Left: $g$ has a branch point of order two at $z=0$.
         Right: $g$ has no branch points at the ends.}
\label{fg:max-moeb}
\end{figure}
\section{Maximal Klein bottles with one end}
\label{sec:klein}

In this section we construct complete maxfaces with the topology of a Klein bottle minus one point and the
 lowest Gauss map degree. Consider the genus one algebraic curve
\[
\overline{M}_r=\left\{(z,w_r)\in \overline{\C}^{2} \,;\, 
                      w_r^2=z\frac{rz-1}{z+r} \right\},
\quad
r\in \R-\{0\},
\]
and set 
$M_r=\overline{M}_r - \{(0,0),(\infty,\infty)\}$.  Define 
\[
I_{r}:\overline{M}_r\longrightarrow\overline{M}_r,\qquad
I_r(z,w_r)=\left(-\frac{1}{\bar z},-\frac{1}{\bar w_r}\right),
\]

\[
g_r=w_r\frac{z+1}{z-1},\qquad
\phi_3 = i\frac{z^2-1}{z^2}dz,
\]
and note that $I_{r}$ has no fixed points, and $g_r$ and $\phi_3$ satisfy \eqref{eq:liftmetric} and \eqref{eq:gI-I(phi)}.  
See Table~\ref{tb:g-phi3}.  
\begin{table} 
\begin{center}
\begin{tabular}{|c||c|c|c|c|c|c|}\hline 
$(z,w_r)$    & $(-r,\infty)$ & $(0,0)$ & $(r^{-1},0)$ & $(\infty,\infty)$ 
           & $(1,*)$ & $(-1,*)$ \\ \hline\hline
$g_r$        & $\infty^1$ & $0^1$ & $0^1$ & $\infty^1$
           & $\infty^1$ & $0^1$ \\ \hline 
$g_r\phi_3$  & --- & $\infty^2$ & $0^2$ & $\infty^4$
           & --- & $0^2$ \\ \hline 
$\phi_3$   & $0^1$ & $\infty^3$ & $0^1$ & $\infty^3$
           & $0^1$ & $0^1$ \\ \hline 
$\phi_3/g_r$ & $0^2$ & $\infty^4$ & --- & $\infty^2$
           & $0^2$ & --- \\ \hline 
\end{tabular}
\end{center}
\caption{The Divisors of the Weierstrass data.}
\label{tb:g-phi3}
\end{table} 
\begin{theorem}[Existence] \label{th:exist}
There are exactly two real values $r_1,$ $r_2 \in \R-\{0\}$ for which the 
maxface 
$$
X_r:M_r\ni p \mapsto 
    \Re\int^p \left(\frac{i}{2}\left(\frac{1}{g_r}-g_r\right),\,
                  \frac{1}{2}\left(\frac{1}{g_r}+g_r\right),\,1\right) \phi_3 
    \in \L^3
$$ 
is well-defined and induces a one-ended maximal Klein bottle $X_r':M_r/\langle I_{r} \rangle \to \L^3.$ 

Furthermore, the maxfaces $X_{r_1}'$ and $X_{r_2}'$ have Gauss map of degree four and four symmetries.
\end{theorem}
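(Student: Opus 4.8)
The plan is to reduce the existence of $X_r$ to a single real equation in $r$, to count its solutions, and then to read off the degree of $\hat g$ and the four symmetries from the algebra of $\overline M_r$. For a cycle $\gamma$ put $\Pi(\gamma)=\int_\gamma g_r\phi_3+\overline{\int_\gamma\phi_3/g_r}$, so that by \eqref{eq:period12} and \eqref{eq:period3} the maxface $X_r$ is well defined precisely when $\Pi(\gamma)=0$ and $\Re\int_\gamma\phi_3=0$ for every $\gamma\in H_1(M_r,\Z)$. First I would note that $\phi_3=i(1-z^{-2})\,dz=i\,d(z+1/z)$ and that $z+1/z$ is single valued on $M_r$ (its only poles on $\overline M_r$ are the two deleted ends); hence $\phi_3$ is exact, all its periods vanish, and \eqref{eq:period3} holds automatically. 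Moreover, since $(0,0)$ and $(\infty,\infty)$ are branch points fixed by the hyperelliptic involution $\sigma(z,w_r)=(z,-w_r)$, and $g_r\phi_3$, $\phi_3/g_r$ are anti-invariant under $\sigma$, these two forms have vanishing residues at the ends; therefore $\Pi$ vanishes on the loops around the ends, and only the two generators $a,b$ of $H_1(\overline M_r,\Z)$ remain.

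The second step is a symmetry reduction. Besides $\sigma$, the curve carries the holomorphic involution $J(z,w_r)=(-1/z,-1/w_r)$ and the antiholomorphic involution $\rho(z,w_r)=(\bar z,\bar w_r)$, with $I_r=J\circ\rho$. A short computation gives $J^*g_r=1/g_r$ and $J^*\phi_3=-\phi_3$, which together with \eqref{eq:gI-I(phi)} yield
\[
\Pi(J\gamma)=-\overline{\Pi(\gamma)},\qquad \Pi(I_r\gamma)=\Pi(\gamma).
\]
Since $J$ is an involution of the elliptic curve $\overline M_r$ with four fixed points, $J_*=-\mathrm{Id}$ on $H_1(\overline M_r,\Z)$; and since $I_r$ is a fixed-point-free antiholomorphic involution with Klein-bottle quotient, $I_{r*}$ has eigenvalues $+1$ and $-1$. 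On the $(-1)$-eigencycle $b$ the relation $\Pi(I_r\gamma)=\Pi(\gamma)$ forces $\Pi(b)=-\Pi(b)$, hence $\Pi(b)=0$; on the $(+1)$-eigencycle $a$ the relation $\Pi(J\gamma)=-\overline{\Pi(\gamma)}$ together with $Ja\sim-a$ forces $\Pi(a)=\overline{\Pi(a)}\in\R$, so that $\Re\int_a\phi_1=0$ holds for free. Hence the entire period problem collapses to the single real equation
\[
\mathcal P(r):=\Re\!\int_a\phi_2=\tfrac12\,\Pi(a)=0 .
\]
Taking $a$ to be the $I_r$-invariant cycle over the arc where $w_r$ is imaginary, $\mathcal P(r)$ is a convergent real elliptic integral of $\tfrac12(g_r+1/g_r)\phi_3$.

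Counting the zeros of $\mathcal P$ is the heart of the argument, and the step I expect to be hardest: $\mathcal P$ is a non-elementary complete elliptic integral with no closed form. The plan is to show that $\mathcal P$ is real-analytic on each component of $\R-\{0\}$, to compute the signs of its boundary limits as $r\to0$ and $r\to\pm\infty$, and to prove that $\mathcal P$ is strictly monotone there---either by differentiating under the integral sign and controlling the sign of $\mathcal P'$, or by expressing $\mathcal P$ as a ratio of standard complete elliptic integrals whose monotonicity is known. The intermediate value theorem then yields exactly two zeros $r_1,r_2\in\R-\{0\}$; the curve isomorphism $z\mapsto1/z$ between $\overline M_r$ and $\overline M_{1/r}$ can be used to organize and cross-check this count.

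Finally, for $r\in\{r_1,r_2\}$ the remaining assertions are algebraic. Counting the zeros and poles of $g_r$ in Table~\ref{tb:g-phi3} gives $\deg\hat g=\deg g_r=4$. The four symmetries are the order-two isometries $\mathrm{Id},R_1,R_2,R_3$ of $\L^3$ (the $\pi$-rotations about the $x_1$-, $x_2$-, $x_3$-axes): from $J^*g_r=1/g_r$, $J^*\phi_3=-\phi_3$ one gets $(\phi_1,\phi_2,\phi_3)\mapsto(\phi_1,-\phi_2,-\phi_3)$, so $J$ (equivalently $\rho=JI_r$) induces $R_1$; from $\sigma^*g_r=-g_r$, $\sigma^*\phi_3=\phi_3$ one gets $(\phi_1,\phi_2,\phi_3)\mapsto(-\phi_1,-\phi_2,\phi_3)$, so $\sigma$ induces $R_3$; and their composite induces $R_2$. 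All of these commute with $I_r$ and descend to $M_r/\langle I_r\rangle$, giving a symmetry group of order four. The antiholomorphic representatives $\rho$ and $\rho\sigma$ have one-dimensional fixed-point sets, which $X_{r_i}'$ maps onto the $x_1$- and $x_2$-axes; hence $X_{r_i}'$ contains both axes and has at least four symmetries, as claimed.
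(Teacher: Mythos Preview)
Your symmetry reduction to a single real period $\mathcal P(r)$ is essentially the same as the paper's (the paper uses only $I_r$ together with the explicit action $(I_r)_*(\gamma_1)=-\gamma_1$, $(I_r)_*(\gamma_2)=\gamma_2$, while you also bring in $J$, but the outcome is identical: one real integral condition on the $I_r$-invariant cycle). The description of the four symmetries via $\sigma$, $J$, $\rho$ is correct and matches the paper's $T_0,T_1,T_2$.

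The genuine gap is in the counting of zeros. Your plan assumes that $\mathcal P$ is strictly monotone on each component of $\R-\{0\}$, and that the two solutions come one from each component, possibly paired by an $r\leftrightarrow 1/r$ isomorphism. This is not what happens. In the paper's normalization the period function $h(r)$ satisfies
\[
\lim_{r\to 0^+}h(r)=-\infty,\qquad h(\tfrac12)>0,\qquad h(1)<0,\qquad \lim_{r\to+\infty}h(r)=-\pi,
\]
so $h$ is \emph{not} monotone on $(0,\infty)$; both roots lie in $(0,1)$ (numerically $r_1\approx 0.171$, $r_2\approx 0.692$), and there are no roots on $(-\infty,0)$ at all. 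Hence differentiating under the integral sign cannot give $\mathcal P'$ of constant sign, and the $r\leftrightarrow 1/r$ symmetry does not pair the roots.

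What the paper does instead is the key idea you are missing. After integrating by parts to make both $h$ and $h'$ proper integrals over $[0,1/r]$, one writes
\[
h(r)=-2\bigl((3r^2-3r+1)A_1(r)+(r-1)(r^2+1)A_2(r)\bigr),\qquad
h'(r)=-2\Bigl(\tfrac{3r-1}{r}A_1(r)+rA_2(r)\Bigr),
\]
with $A_1,A_2>0$. Eliminating $A_2$ at a zero $r_0$ of $h$ yields $h'(r_0)=q(r_0)A_1(r_0)$ with an explicit rational function
\[
q(r)=\frac{2(r^3-3r^2+4r-1)}{r(r-1)(r^2+1)}.
\]
The sign pattern of $q$ (negative on $(-\infty,0)$, one sign change in $(0,1)$) then forbids roots on $(-\infty,0)$ and forbids more than two roots on $(0,1)$. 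Combined with the intermediate-value argument above, this gives exactly two solutions. This ``sign of the derivative at a root is controlled by a rational function'' trick is the substantive step that replaces your monotonicity hypothesis.
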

\begin{proof}
In order to solve the arising period problem, we first observe that  $\phi_3 = d(i(z^2+1)/z)$ is exact and \eqref{eq:period3} is satisfied.  
Moreover, $\phi_{1,r}=(i/2) (1/g_r-g_r) \phi_3$ and $\phi_{2,r}=(1/2) (1/g_r+g_r) \phi_3$ have no residues at the 
ends, hence it remains to check  \eqref{eq:period12} for 
$\gamma\in H_1(\overline{M}_r,\Z)$.  Let $c_1$ and $c_2$ be two loops in $\C-\{0,-r,1/r\}$ winding once around $[-r,0]$ and $[0,r^{-1}]$, 
respectively, and call $\gamma_1$ and $\gamma_2$ as their corresponding liftings  via $z$ to $\overline{M}_r$  (see Figure~\ref{fg:loop}).
\begin{figure}[htbp] 
\begin{center}
 \includegraphics[width=.70\linewidth]{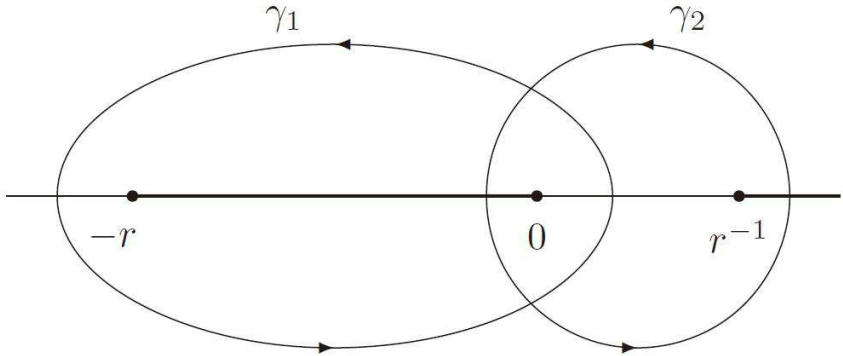}
\end{center}
\caption{Projection to the $z$-plane of the loops 
         $\gamma_1$ and $\gamma_2$.}
\label{fg:loop}
\end{figure} 

Let $(I_r)_*: H_1(\overline{M}_r,\Z) \to H_1(\overline{M}_r,\Z)$ denote the group isomorphism induced by $I_r.$ 
A straightforward computation gives that

\begin{equation}\label{lm:I*gamma}
(I_r)_*(\gamma_1)=-\gamma_1 \quad \mbox{and}\quad 
(I_r)_*(\gamma_2)= \gamma_2.  
\end{equation}

For any $j,k\in \{1,2\}$, we have
\[
 \int_{\gamma_j}\phi_{k,r}
=\int_{(I_r)_*(\gamma_j)}I_r^*(\phi_{k,r})
=\int_{(I_r)_*(\gamma_j)}\overline{\phi_{k,r}}
\]
and so $$\int_{\gamma_j}\phi_{k,r} + \int_{\gamma_j}\overline{\phi_{k,r}}
=\int_{(I_r)_*(\gamma_j)}\overline{\phi_{k,r}} + \int_{\gamma_j}\overline{\phi_{k,r}}.$$ 
Thus $$2\Re \int_{\gamma_j}\phi_{k,r} 
=\int_{\gamma_j+(I_r)_*(\gamma_j)}\overline{\phi_{k,r}}
=\int_{\gamma_j+(I_r)_*(\gamma_j)}\phi_{k,r},$$ and  $X_{r}=\Re\int (\phi_{1,r},\phi_{2,r},\phi_3) : M_r\longrightarrow\L^3$
is well-defined on $M_r$ if and only if 
\begin{equation}\label{eq:gamma+I*gamma=0}
\int_{\gamma_j+(I_r)_*(\gamma_j)}\phi_{k,r} = 0
\end{equation}
for all $j,k\in \{1,2\}$.  

\begin{lemma}\label{lm:gphi=0}
$X_{r}:M_r\to\L^3$ is well-defined on $M_r$ if and only if
\begin{equation}\label{eq:gphi=0}
\int_{\gamma_2}\frac{w_r(z+1)^2}{z^2}dz = 0.
\end{equation}
\end{lemma}

\begin{proof}
By (\ref{lm:I*gamma}) and 
\eqref{eq:gamma+I*gamma=0}, $X_{r}$ is well-defined if and only if 
\[
\int_{\gamma_2+(I_r)_*(\gamma_2)}\phi_{k,r} = 0
\]
holds for $k=1,2$.  
In other words, $X_{r}$ is well-defined if and only if  
\[
   \int_{\gamma_2}\left(\frac{1}{g_r}+g_r\right)\phi_3
 = \int_{\gamma_2}\left(\frac{1}{g_r}-g_r\right)\phi_3
 = 0
\]
holds, that is to say,
\[
\int_{\gamma_2}\frac{\phi_3}{g_r} = \int_{\gamma_2}g_r\phi_3 = 0
\]
holds.  
However, 
\[
   \int_{\gamma_2}\frac{\phi_3}{g_r}
 = \int_{(I_r)_*(\gamma_2)}I_r^*\left(\frac{\phi_3}{g_r}\right)
 = \int_{\gamma_2}\overline{g_r\phi_3}, 
\]
hence $X_{r}$ is well-defined on $M_r$ if and only if 
\[
\int_{\gamma_2}g_r\phi_3 = \int_{\gamma_2}\frac{w_r(z+1)^2}{z^2}dz = 0.
\qedhere
\]
\end{proof}
The period problem is equivalent to solve \eqref{eq:gphi=0}.  
To avoid divergent integrals we add the exact one-form $dF,$ where 
$$F= \frac{2w_{r}(z-2r^3z^2+r^2z(1+2z)-r(-1+2z+z^2))}{rz},$$ 
getting
\[
 \frac{w_r(z+1)^2}{z^2}dz +d F=-\frac{2w_{r} (-1+z+r(2-3z+r(-4+4r+3z)))}{r+z}dz.
\]

Since the right-hand side is a holomorphic differential on 
$M_r - \{(-r,\infty)\}$, the loop $\gamma_2$ can be collapsed over 
the interval $[0,r^{-1}]$ by Stokes theorem and $X_r$ is well-defined if and only if 
\[
h(r):=\int_0^{r^{-1}}
      -\frac{2|w_r(z)| (-1+z+r(2-3z+r(-4+4r+3z)))}{r+z}dz
     =0.
\]
A straightforward computation gives that $$h_+(0):=\lim_{r \to 0, \;r>0} h(r)=-\infty,\; h(+\infty):=\lim_{r \to +\infty} h(r)=-\pi,$$
$$h_-(0):=\lim_{r \to 0, \;r<0} h(r)=+\infty,\; h(-\infty):=\lim_{r \to -\infty} h(r)=+\pi.$$

Moreover, 
$$h(1/2)=\int_0^{2} \frac{2 |w_{1/2}(z)| (2-z)}{1+2z}dz> 0 
  \quad \mbox{and} \quad 
  h(1)=-\frac{4 \Gamma(3/4)^2 + \Gamma(-3/4) \Gamma(5/4)}{\sqrt{2 \pi}}<0,
$$ 
where $\Gamma$ is the classical
Gamma function. As a consequence,  $h$ has at least two roots in $(0,1)$ (and $X_r$ is well-defined at least for these two real values). 

Let us show that $h$ has exactly two real roots on $\R-\{0,1\}$ (recall that $h(1)<0$). 

It is clear that 
$$h'(r)=\frac{1}{2}\int_{\gamma_2} 
  \frac{\partial}{\partial r}\left( \frac{w_r(z+1)^2}{z^2}\right) dz,$$ 
hence a direct computation gives that 

\begin{equation}\label{eq:h'(r)}
h'(r)=\int_{0}^{r^{-1}} \frac{|w_r(z)| (1 + z)^2 (1 + z^2)}{2 z^2 (r + z) (-1 + r z)}dz.
\end{equation} 
Moreover,
$$\frac{w_r (1 + z)^2 (1 + z^2)}{2 z^2 (r + z) (-1 + r z)}dz+dH=-\frac{2 w_r (-r + 4 r^2 - z + 3 r z)}{r (r + z)}dz,$$ 
where 
$$
H=-\frac{w_r (r + 2 z - 2 r z - r z^2 + 4 r^2 z^2)}{r^2 z}. 
$$ 
Integrating by parts, we deduce that
$$h'(r)=\int_{0}^{r^{-1}} -\frac{2 |w_r(z)| (-r + 4 r^2 - z + 3 r z)}{r (r + z)}dz.$$

Now we rewrite $h(r)$ and $h'(r)$ as follows:
\begin{align*}
h(r) &=-2\left((3r^2-3r+1)A_1(r)+(r-1)(r^2+1)A_2(r)\right), \\
h'(r)&=-2\left(\frac{3r-1}{r}A_1(r)+rA_2(r)\right),
\end{align*}
where $A_i:\R -\{0\}\to\R_{+}$ ($i=1,2$) are the positive functions given by
\[
A_1(r)=\int_0^{r^{-1}}|w_r(z)|dz\quad\text{and}\quad
A_2(r)=\int_0^{r^{-1}}\frac{|w_r(z)|}{z+r}dz.
\]
If $h(r_0)=0$, then 
\[
A_2(r_0)=-\frac{3r_0^2-3r_0+1}{(r_0-1)(r_0^2+1)}A_1(r_0),
\]
hence necessarily $r_0<1$. Therefore $h(r_0)=0$ implies that
\[
h'(r_0)=-2\left(\frac{3r_0-1}{r_0}-\frac{r(3r_0^2-3r_0+1)}{(r_0-1)(r_0^2+1)}\right)A_1(r_0) 
=q(r_0)\int_0^{r_0^{-1}}|w_{r_0}(z)|dz,
\]
where $q:\R-\{0,1\} \to \R$ is the rational function
\[
q(r)=\frac{2(r^3-3r^2+4r-1)}{r(r-1)(r^2+1)}. 
\]
Basic algebra says that 
$$s=1 - \left( \frac{2}{3 (-9 + \sqrt{93})} \right)^{1/3} + \left( \frac{-9 + \sqrt{93}}{18} \right)^{1/3} \approx 0.317672
$$ 
is the unique real root of $q$ in $\R-\{0,1\},$ and an elementary analysis says that $q|_{(-\infty,0)}<0,$ $q|_{(0,s)}>0$ and $ q|_{(s,1)}<0.$

Assume for a moment that $h$ has a root in $(-\infty,0).$ Since $h_-(0)=+\infty$ and $h(-\infty)>0,$ we can find $s_0 \in (-\infty,0)$ such that $h(s_0)=0$ and $h'(s_0) \geq 0,$ contradicting that $q(s_0)<0.$ Therefore, the roots of $h$ (at least two) lie in $A=(0,1).$  Suppose that $h$ has three real roots on $A,$  and label $r_1<r_2<r_3$ as the  three smallest real roots of $h$ in $A.$ 

Since $h_+(0)=-\infty,$ $h$ must be increasing on $(r_1-\epsilon,r_1)$ for small $\epsilon$ and $h'(r_1) \geq 0.$ This implies that $r_1 \leq s.$ 

Let us show that $r_2 \geq s.$ If $r_1=s$ then $r_2>s$ and we are done. Suppose $r_1<s.$ In this case $h'(r_1) > 0$ and $h$ must be positive in $(r_1,r_2),$ hence $h$ must be decreasing on $(r_2-\epsilon,r_2)$ for small $\epsilon$ and $h'(r_2) \leq 0.$ This clearly implies that $r_2 \geq s.$ 

As a consequence,  $r_3>s$ and $h'(r_3)<0,$ which obviously contradicts that $h$ increasing on $(r_3-\epsilon,r_3)$ for small $\epsilon$ and proves our assertion.

This proves that $h$ has exactly two real roots $r_1$ and $r_2$ lying in $(0,1).$

Finally, observe that the transformations $T_0(z,w_r)=(z,-w_r),$ $T_1(z,w_r)=(\overline{z},\overline{w_r})$ and $T_2=T_1 \circ T_0$ on $\overline{M}_r$ induce the $180^\circ$-rotations about the $x_3$, $x_1$ and $x_2$ axes, respectively.
This implies that the maxface $X_r$ has four symmetries. 
\end{proof}

The values $r_1$ and $r_2$  can be estimated  using the Mathematica software, obtaining that $r_1\approx 0.17137$ and $r_2\approx 0.691724$.
See Figure~\ref{fg:h1}.

\begin{figure}[htbp] 
\begin{center}
\begin{tabular}{cc}
 \includegraphics[width=.40\linewidth]{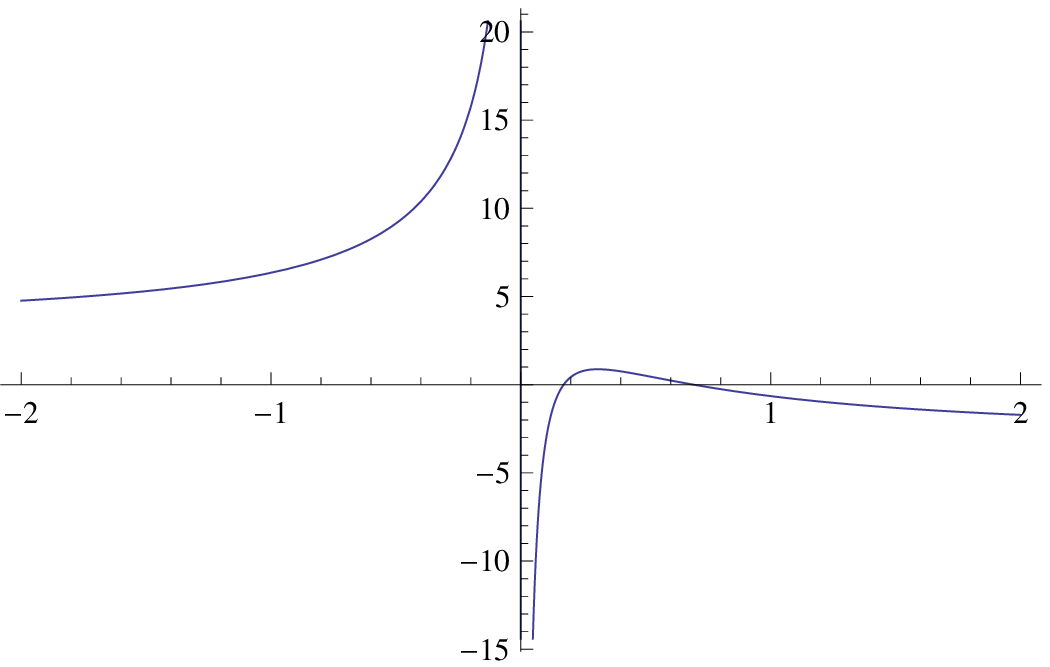} & 
 \includegraphics[width=.40\linewidth]{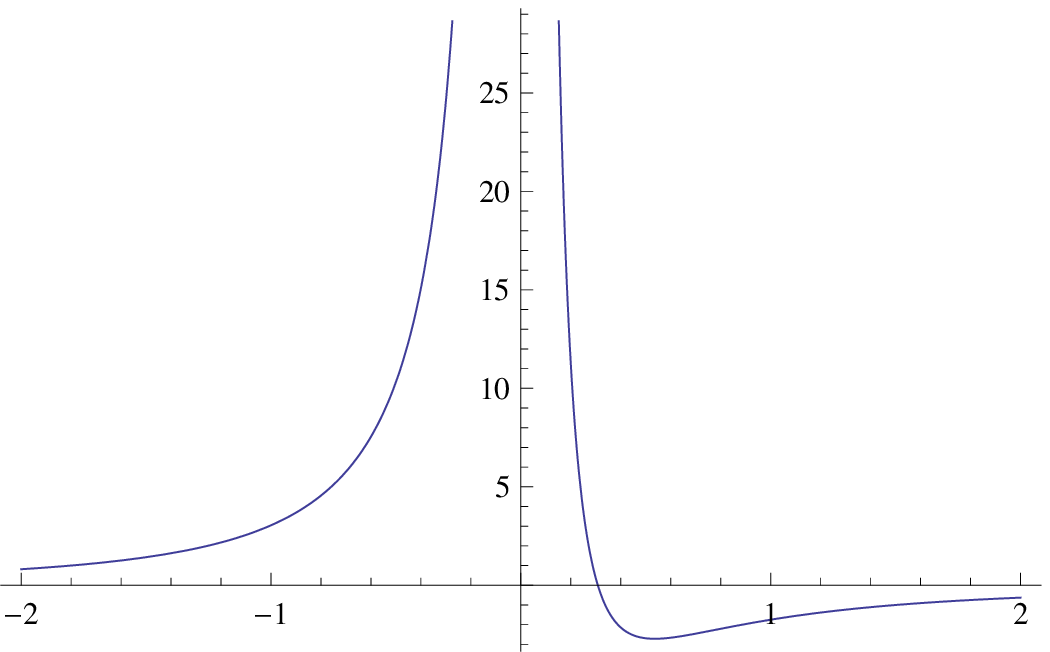} 
\end{tabular}
\end{center}
\caption{Left: The period function $h(r)$.  $h(r)=0$ when 
         $r\approx 0.17137$ and $r\approx 0.691724$. 
         Right: The derivative $h'(r)$ of $h(r)$.}
\label{fg:h1}
\end{figure} 

\begin{remark}
The above argument  is based on the construction of the 
L\'{o}pez' minimal Klein bottle \cite{L1}. 
The most significant difference is that in the Riemannian case the period problem has a unique solution.
\end{remark}
\begin{figure}[htbp] 
\begin{center}
\begin{tabular}{cc}
 \includegraphics[width=.33\linewidth]{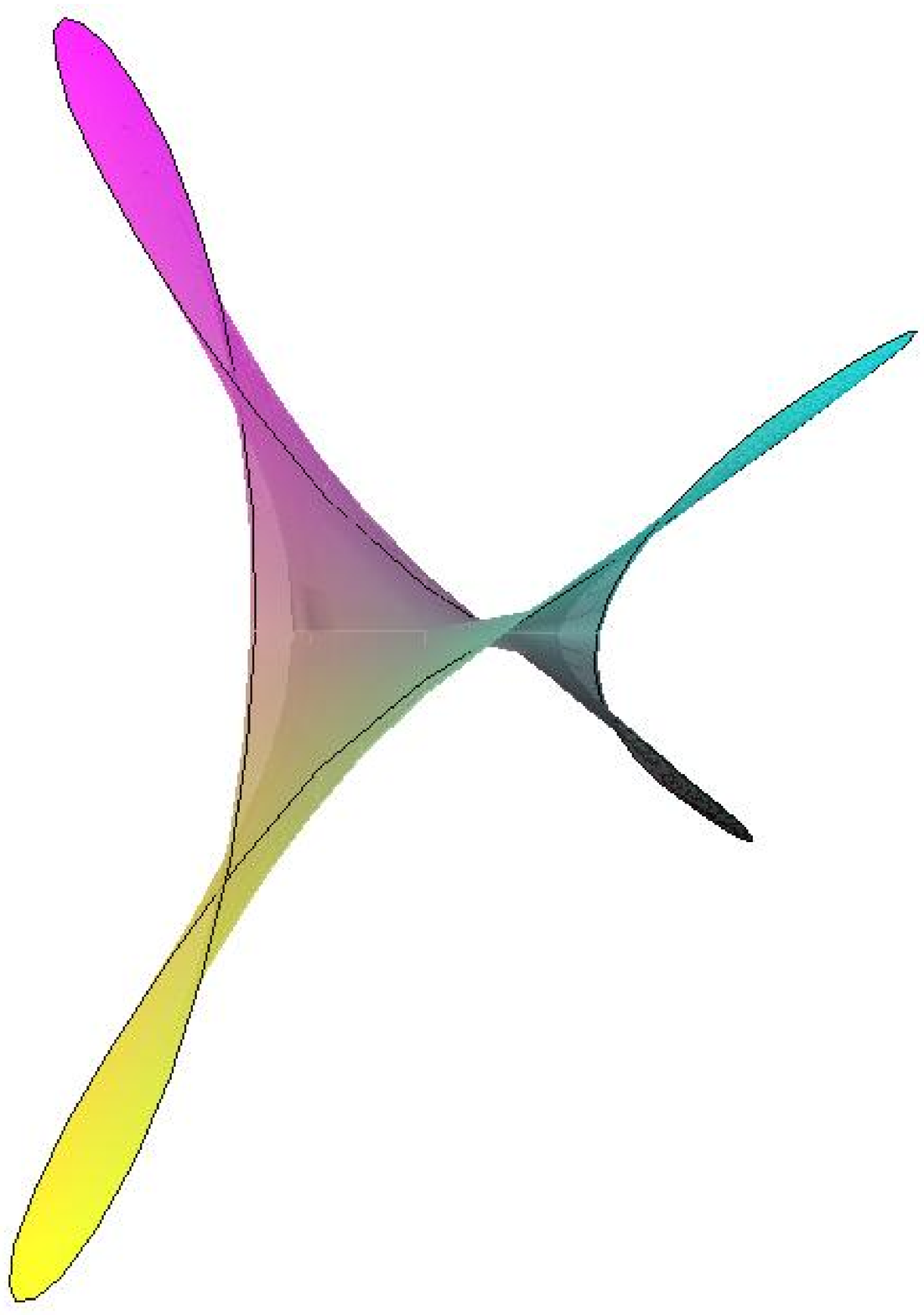} & 
 \includegraphics[width=.43\linewidth]{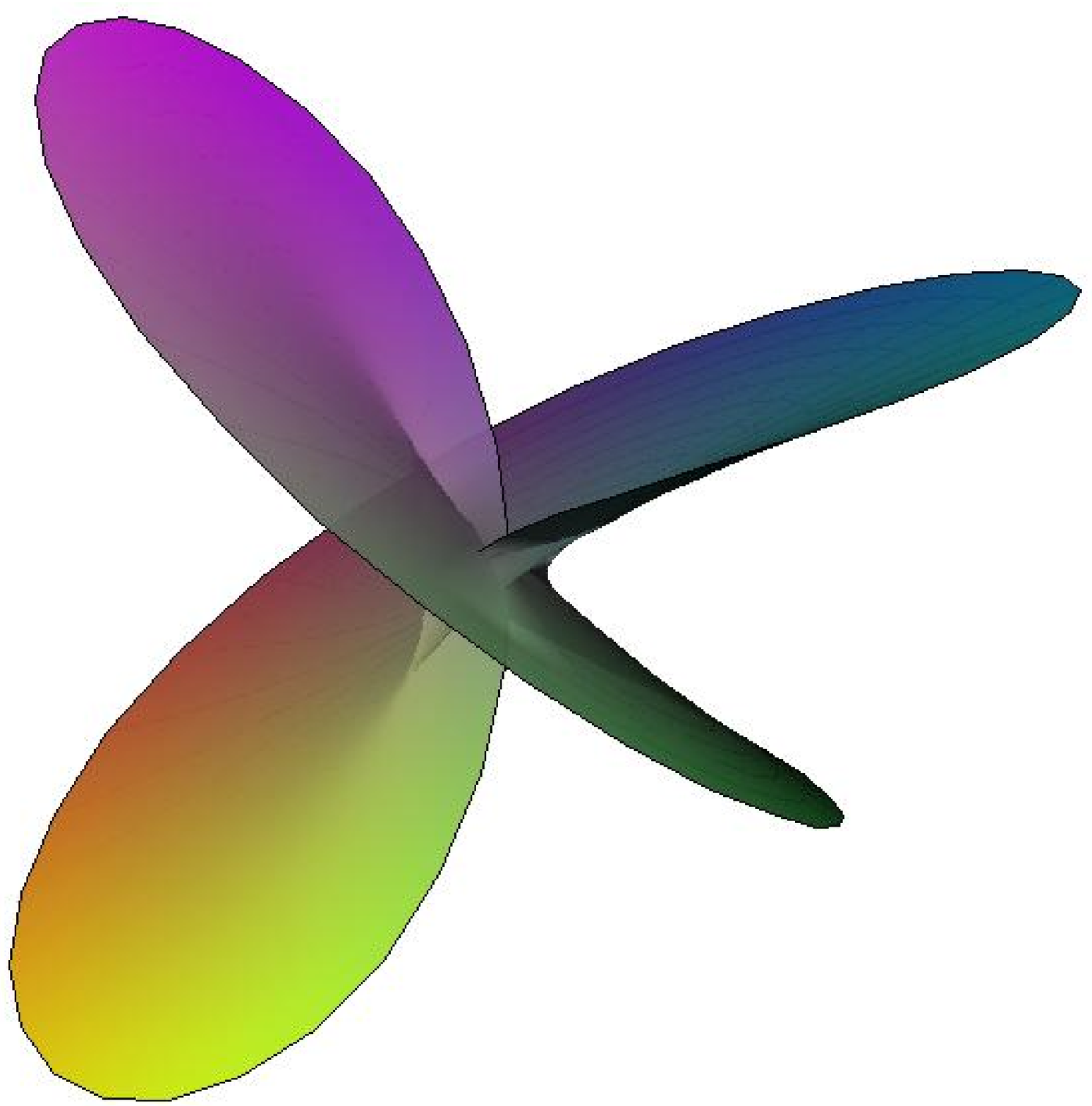} 
\end{tabular}
\end{center}
\caption{Maximal Klein Bottles with one end.  
         $r\approx 0.17137$ in the left, and $r\approx 0.691724$ 
         in the right.}
\label{fg:max-klein1}
\end{figure} 
%


The maximal Klein bottles exhibited in Theorem \ref{th:exist} can be characterized in terms of their symmetry:

\begin{theorem}[Uniqueness] \label{th:uniq}
Let $X':M'\to\L^3$ be a complete nonorientable maxface with genus two, 
one end and Gauss map of degree four.  
Assume that $X'$ has at least four symmetries.  
Then $X'$ is one of the examples constructed in Theorem \ref{th:exist}.  
\end{theorem}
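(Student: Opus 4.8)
The plan is to reverse-engineer the construction in Theorem~\ref{th:exist}: starting from the abstract data of such an $X'$, use the four symmetries to rigidify the conformal picture until only the one-parameter family $(\overline{M}_r,I_r,g_r,\phi_3)$ survives, and then appeal to the period analysis already done. First I would fix the conformal model. As $X'$ is a one-ended Klein bottle of genus two, its orientable double cover is $M=\overline{M}-\{q_1,I(q_1)\}$ with $\overline{M}$ a torus and $I$ a fixed-point-free antiholomorphic involution; the Jorge--Meeks formula of Theorem~\ref{th:oss-ineq}, with $\chi(\overline{M}')=0$ and one end, forces the winding number $\mu_1=3$. By Proposition~\ref{pro:UY} the limit normal at the end is timelike, so $|g|\neq1$ there. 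Aligning this unique asymptotic timelike direction with the $x_3$-axis and using $g\circ I=1/\bar g$ from \eqref{eq:gI-I(phi)}, I may assume $g(q_1)=0$ and $g(I(q_1))=\infty$.

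The core of the argument is the analysis of the symmetry group $G$. Since there is a single end, every symmetry fixes it and, preserving the timelike asymptotic direction, lies in the stabilizer of the $x_3$-axis; finiteness of the total curvature makes $G$ finite, so its rotational part about $x_3$ is cyclic. Each symmetry lifts to an automorphism of $\overline{M}$ commuting with $I$, which is conformal or anticonformal according to whether it preserves or reverses the time-orientation of the normal. The key claim is that $|G|\geq4$ forces $G\cong\Z_2\times\Z_2$, realized by the $180^\circ$-rotation $T_0$ about the $x_3$-axis (a conformal involution) together with two anticonformal involutions $T_1,T_2$ that are $180^\circ$-rotations about horizontal axes. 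Here $T_0$ must be the elliptic involution $[-1]$ of the torus --- the only order-two conformal automorphism with fixed points --- and the competing configurations (a larger cyclic rotation group about $x_3$, or symmetry elements that do not close up) are to be excluded using the degree-four condition and the no-branch-point requirement \eqref{eq:liftmetric}.

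Once $T_0=[-1]$ is in hand, the quotient $z\colon\overline{M}\to\overline{M}/\langle T_0\rangle\cong\overline{\C}$ is a two-sheeted cover branched over the four $2$-torsion points. As $T_0$ fixes the ends, two of these branch points are $q_1,I(q_1)$, which I place at $z=0,\infty$. The antiholomorphic involution induced by $I$ on $\overline{\C}$ has no fixed points and swaps $0$ and $\infty$, hence equals $z\mapsto-1/\bar z$, and compatibility of the remaining two branch points with $I$ and with the horizontal symmetries $T_1,T_2$ forces them to be $\{1/r,-r\}$ for some $r\in\R$. This realizes $\overline{M}$ as $\overline{M}_r$ and $I$ as $I_r$. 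The relations $g\circ I=1/\bar g$, $I^*\phi_3=\bar\phi_3$, $\deg g=4$, the divisor data of Table~\ref{tb:g-phi3}, and \eqref{eq:liftmetric} --- with the horizontal symmetries pinning the zeros and poles of $g$ to $z=\pm1$ --- then identify $(g,\phi_3)$ with $(g_r,\,i(z^2-1)z^{-2}\,dz)$ up to the allowed normalizations.

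With the data reduced exactly to $(\overline{M}_r,I_r,g_r,\phi_3)$, well-definedness of $X'$ is equivalent to the single period equation \eqref{eq:gphi=0}, which by Theorem~\ref{th:exist} holds precisely when $r=r_1$ or $r=r_2$; hence $X'$ is $X'_{r_1}$ or $X'_{r_2}$. I expect the main obstacle to be the middle rigidity step: translating the Lorentzian symmetries into antiholomorphic/holomorphic automorphisms of the torus commuting with $I$, proving that four symmetries force $G\cong\Z_2\times\Z_2$ with the conformal element the elliptic involution, and then using the antiholomorphic symmetries to fix both the real moduli $\{1/r,-r\}$ and the location $z=\pm1$ of the zeros of $g$. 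Everything downstream of that reduction is already furnished by Theorem~\ref{th:exist}.
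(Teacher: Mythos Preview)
Your overall strategy is close to the paper's, but there is a genuine gap at the step where you descend $I$ to $\overline{\C}=\overline{M}/\langle T_0\rangle$. You claim the induced antiholomorphic involution on $\overline{\C}$ ``has no fixed points'' and hence equals $z\mapsto-1/\bar z$. This does not follow: fixed-point-freeness of $I$ on $\overline{M}$ does not descend to the quotient, because $z_0$ is fixed downstairs whenever $I(p)\in\{p,T_0(p)\}$ for some $p$ over $z_0$, and the antiholomorphic involution $I\circ T_0$ on the torus may well have a fixed curve even though $I$ does not. Concretely, $z\circ I=1/\bar z$ (fixed circle $|z|=1$) is realized by the torus $v^2=z(z-r)(rz-1)$ with $I(z,v)=(1/\bar z,\bar v/\bar z^2)$, and one checks this $I$ is fixed-point-free for generic $r$. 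The paper devotes its entire Case~1 to this situation and eliminates it only after using the extra symmetry $T_1$ to force the free parameter real and then showing that the period condition \eqref{eq:period12} fails along the loop over $[r,1/r]$. Similarly, within Case~2 the sub-case $g\circ T_0=g$ must be ruled out by a separate period computation. These case analyses cannot be skipped.

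A secondary issue is your treatment of the symmetry group. The paper does not sort lifts into conformal/anticonformal by time-orientation, nor does it argue that $G\cong\Z_2\times\Z_2$. Instead it lifts every $S\in\Sym(X')$ to its unique \emph{holomorphic} lift $\hat S$ and proves, via Riemann--Hurwitz on $\overline{M}'/\langle S\rangle$ (using $\chi(\overline{M}')=0$ and that $S$ fixes the unique end), that every symmetry satisfies $S^2=\Id$. A pigeonhole then produces $T_0\neq\Id$ with $T_0(P)=P$, and Riemann--Hurwitz on $\overline{M}/\langle T_0\rangle$ gives the four fixed points. Your appeal to ``the degree-four condition and the no-branch-point requirement'' to exclude larger cyclic groups is not the mechanism here; the order-two conclusion comes from the topology of $\overline{M}'$, not from $\deg g$.
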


\begin{proof}
By definition,  an intrinsic isometry $S:M'\to M'$ is said to be a symmetry of $X'$ if there exists 
 a Lorentzian isometry $\widetilde{S}:\L^3 \to \L^3$ such that $X \circ S=\widetilde{S} \circ X.$
Symmetries of $X'$ are conformal transformations and extend conformally to the compactification $\overline{M}'$ of $M'.$
We call $\Sym(X')$ as the symmetry group of $X'.$ 

Let $(M,I,g,\phi_3)$ denote the Weierstrass data of $X':M'\to\L^3$, and up to a Lorentzian isometry, suppose that $g(P)=1/g(I(P))=0.$ 
We know that $M=\overline{M} - \{P,I(P)\},$ where $\overline{M}$ is a conformal torus  
and $P\in\overline{M}$. As usual, label $\pi:\overline{M} \to \overline{M}'$ as the two sheeted
orientable double cover of  $\overline{M}'$ and $X=X' \circ \pi:M \to \L^3$ as the associated orientable maxface. For each $S \in \Sym(X'),$ let $\hat{S}:\overline{M} \to \overline{M}$
denote the unique holomorphic lifting of $S,$ that is to say, the unique orientation preserving transformation in $\overline{M}$
satisfying that $\pi\circ \hat{S}=S \circ \pi.$  Obviously $\hat{S} \circ I=I \circ \hat{S}.$ Write $\Sym_+(X)=\{\hat{S};\; S \in \Sym(X')\}$ and observe that 
$\Sym_+(X)$ is a group isomorphic to $\Sym(X').$
Note that $\hat{S}\in\Sym_+(X)$ satisfies $\hat{S}(P)=P$ or $\hat{S}(P)=I(P)$. 

Take an arbitrary $S\in \Sym(X'),$ and let us show that $S^2=\Id.$ 

Indeed, since $\{S^m;\; m \in \Z\}$ is a discrete group, there is $n \in \N$ such that $S^n=\Id$ and $S^j \neq \Id,$ $j=1,\ldots,n-1.$ 
Consider the orbit space $\overline{M}'/\langle S\rangle$ and the projection $\sigma:\overline{M}' \to \overline{M}'/\langle S\rangle.$ 
By Riemann-Hurwitz formula, $0=\chi(\overline{M}')=n \chi(\overline{M}'/\langle S\rangle) -V_S,$ where $V_S$ is the total branching number of $\sigma.$  
Since $S(\pi(P))=\pi(P)$, we get $V_S \geq n-1$ and $0\leq n \chi(\overline{M'}/\langle S\rangle) -n+1.$ This implies that  $\chi(\overline{M'}/\langle S\rangle)=1$ and $V_S=n.$ Therefore, there exists $Q \in \overline{M'}$ and a divisor $k$ of $n$  such that $n-k=1.$ This is only possible when $n=k+1=2,$ proving our assertion.

As a consequence, $T^2=\Id$ for all $T\in \Sym_+(X).$  Moreover, up to a rotation about the $x_3$-axis,  $g \circ T \in \{\pm g, 1/g\}$  and $T^*(\phi_3)=\pm \phi_3$ for any $T\in \Sym_+(X).$ To check this, just take into account that $g \circ T=L \circ g,$   where $L$ is the M\"{o}bius transformation induced by the linear part of $T$ (here we are identifying $\overline{\C}-\{|z|=1\}$ with the Lorentzian sphere of radius $-1$ via the Lorentzian stereographic projection). The normalization $g(P)=1/g(I(P))=0$ and the fact $T^2=\Id$ show that $g \circ T \in \{\pm g, \theta/g\}$, $|\theta|=1,$ and so the desired statement.

Let us show that there exists $T_0 \in \Sym_+(X),$ $T_0 \neq \Id,$ satisfying that $T_0(P)=P.$ Indeed, since $\#\Sym_+(X)\geq 4$, we can find $T_1,$ $T_2 \in \Sym_+(X)-\{\Id\}$ with $T_1 \neq T_2.$ If $T_1(P)=T_2(P)=I(P)$ (otherwise we are done), it suffices to take $T_0=T_1 \circ T_2.$  

Consider a such $T_0,$ and note that $T_0(I(P))=I(P)$ as well, that is to say, $T_0$ has at least two fixed points. By the Riemann-Hurwitz formula
$$0=\chi(\overline{M})=2 \chi(\overline{M}/\langle T_0\rangle) -V\geq 2 (\chi(\overline{M}/\langle T_0\rangle) -1),$$ where $V$ is the number of fixed points of $T_0.$ This clearly implies that $\chi(\overline{M}/\langle T_0\rangle)=2$ and $V=4.$ In other words, $\chi(\overline{M}/\langle T_0\rangle)=\overline{\C}$ and $T_0$ has in fact four fixed points, namely $\{P,I(P),Q,I(Q)\}.$

Let $z:\overline{M} \to \overline{\C}\equiv \overline{M}/\langle T_0\rangle$ denote the natural two sheeted branched covering. Up to a conformal transformation,  we will suppose that $z(P)=1/z(I(P))=0$ and $r=z(Q) \in \R-\{0\}.$ We infer that $z\circ I=\mu/\overline{z},$ and since $I$ is an involution, then $\mu \in \R-\{0\}.$ Up to the change $z \to \sqrt{|\mu|} z$, we can put $\mu^2=1.$ We distinguish two cases: $z \circ I=1/\overline{z}$ and  $z \circ I=-1/\overline{z}.$\\

{\em Case} 1. \ $z \circ I=1/\overline{z}.$

Up to biholomorphisms, $\overline{M}=\{(z,v)\in\overline{\C}^2;\; v^2=z(z-r)(r z-1)\}$ and $T_0(z,v)=(z,-v).$ As $T_0 \circ I=I \circ T_0$ and $I$ has no fixed points, we get $I(z,v)=(1/\overline{z},\overline{v}/\overline{z}^2).$ Consider $T_1\in \Sym_+(X)-\{\Id,T_0\}$ and note that $T_1(P)=I(P)$ (otherwise $T_1$ would be an holomorphic involution fixing $P$ and $I(P),$ hence $T_1=T_0$ which is absurd). Thus we get that $z \circ T_1=\lambda/z,$ and since $T_1$ leaves invariant the branch point set of $z,$  $\lambda=1.$

Let us determine $g.$ Basic Algebraic Geometry says that $g$ is a rational function of $z$ and $v.$ Moreover, we know that $g\circ I=1/\overline{g}$ and $g \circ T_0=\pm{g}$ (recall that $T_0(P)=P$ and so $(g\circ T_0)(P)=0$).  

Suppose for a moment that $g \circ T_0=g.$ In this case, $g=R(z)$ where $R(z)$ is a rational function of $z.$   
Up to rotations about the $x_3$-axis, it is easy to get 
$g=z(z-a)/(\overline{a} z-1),$ $a \in \C$. 
Here we have taken into account that $g$ has degree four, $g(0)=0$ and $g \circ I = 1/\overline{g}$.  
Then the conditions \eqref{eq:R} and $I^*(\phi_3)=\overline{\phi}_3$ imply that $\phi_3=i A (z-a)(\overline{a} z-1)(z v)^{-1} dz,$ $A \in \R-\{0\}$ (up to scaling in $\L^3$, we may assume $A\in\{\pm 1\}$).  
Furthermore, $g \circ T_1=\pm 1/g$ forces  $a \in \R.$ Let $\gamma \in H_1(\overline{M},\Z)$ denote the loop $z^{-1}([r,1/r])$ and observe that $I_*(\gamma) =\gamma,$ where $I_*:H_1(\overline{M},\Z)\to H_1(\overline{M},\Z)$ is the isomorphism induced by $I.$ By the same argument as in Lemma~\ref{lm:gphi=0},  $X'$ is well defined if and only if  
$$\int_\gamma \phi_3 g=0.$$ 
However,   $\phi_3 g= i A (z-a)^2v^{-1} dz$  has non zero integral along $[r,1/r],$ getting a contradiction.

Assume now that $g \circ T_0=-g.$ Then $g=R(z) v,$ where $R(z)$ is a rational function of $z.$ 
By reasoning as above, we get either 
$$g=\frac{v(z-a)}{(z-r)(a z-1)} 
  \quad \mbox{or} \quad
  g=\frac{v(z-a)}{(r z+1)(a z-1)},
$$ 
and in any case $\phi_3=i (z-a) (a z-1)z^{-2} dz,$ where  $a \in \R-\{0,1/r\}.$ Since $\phi_3$ has no real periods, its residue at $z=0$ must be real, that is to say, $1+a^2=0,$ a contradiction.

Therefore, this case is impossible.\\
 
{\em Case} 2. \ $z \circ I=-1/\overline{z}.$

By reasoning as above, we get $\overline{M}=\{(z,v)\in\overline{\C}^2;\; v^2=z(z+r)(r z-1)\},$ $r \in \R-\{0\},$ $I(z,v)=(-1/\overline{z},\pm\overline{v}/\overline{z}^2),$ $T_0(z,v)=(z,-v)$ and $T_1(z,v)=(-1/z,\pm v/z^2).$ 

Suppose that $g \circ T_0=g$ and  $g=R(z),$ where $R(z)$ is a rational function of $z.$ Up to a rotation about the $x_3$-axis, we get $g=z (z-a)/(a z+1),$ $\phi_3=A (z-a)(a z+1)(z v)^{-1} dz,$ $a \in \R,$ $A \in \{\pm 1,\pm i\}.$ Consider the interval $J \subset \R$ with endpoints in $\{0,-r,1/r\}$ and such that $I_*(\gamma) =\gamma,$ where $\gamma=z^{-1}(J).$ 
By reasoning as above, we get 
$$\int_\gamma \phi_3 g \neq 0,$$  
contradicting the period condition.  

Assume now that $g \circ T_0=-g.$ As above, either  
$$g=\frac{v(z+a)}{(z+r)(a z-1)} 
  \quad \mbox{or} \quad 
  g=\frac{v(z+a)}{(r z-1)(a z-1)}.
$$ 
Up to relabeling $r=z(I(Q))$, we can deal only with the first case 
$$g=\frac{v(z+a)}{(z+r)(a z-1)}.$$ 
Then $\phi_3=i (z-a) (a z+1)z^{-2} dz,$ where  $a \in \R-\{r\}.$  
Moreover, the condition $g \circ I=1/\overline{g}$ forces that $I(z,v)=(-1/\overline{z},-\overline{v}/\overline{z}^2).$ Since $\phi_3$ has no real periods, its residue at $z=0$ vanishes and $a^2=1$ (up to the changes $z \to -z$ and $r\to-r,$ we can put $a=1$). 
These Weierstrass data correspond to the examples in Theorem \ref{th:exist}, concluding the proof.  
\end{proof}

Theorem B in the introduction follows from Theorems \ref{th:exist} and \ref{th:uniq}.  


%
\bigskip
\address{ 
Department of Mathematics \\
Fukuoka University of Education \\
Munakata, Fukuoka 811-4192 \\
Japan
}
{fujimori@fukuoka-edu.ac.jp}

\address{
Departamento de Geometr\'{\i}a y Topolog\'{\i}a \\
Facultad de Ciencias \\
Universidad de Granada \\
18071 Granada \\
Spain
}
{fjlopez@ugr.es}

\begin{thebibliography}{FSUY}
%
\bibitem[ACM]{ACM}\textsc{L.~J.~Al\'{i}as, R.~M.~B.~Chaves and P.~Mira}, 
{Bj\"{o}rling problem for maximal surfaces in Lorentz-Minkowski space}, 
Math. Proc. Cambridge Philos. Soc. {\bf 134} (2003), 289--316.
%
\bibitem[C]{ca}\textsc{E.~Calabi}, 
{Examples of the Bernstein problem for some nonlinear equations},  
Global Analysis (Proc. Sympos. Pure Math., Vol. XV, Berkeley, Calif., 1968)  pp. 223--230 Amer. Math. Soc., Providence, R.I., 1970.
%
\bibitem[FL]{FL}\textsc{I.~Fern\'{a}ndez and F.~J.~L\'{o}pez}, 
{Periodic maximal surfaces in the Lorentz-Minkowski space $\L^3$}, 
Math. Z. {\bf 256} (2007), 573--601. 
%
\bibitem[FLS]{FLS}\textsc{I.~Fern\'{a}ndez, F.~J.~L\'{o}pez and R.~Souam}, 
{The space of complete embedded maximal surfaces with isolated 
     singularities in the 3-dimensional Lorentz-Minkowski space},
    Math. Ann. {\bf 332}  (2005), 605--643.
%
\bibitem[FSUY]{FSUY}\textsc{S.~Fujimori, K.~Saji, M.~Umehara and K.~Yamada}, 
{Singularities of maximal surfaces},   Math. Z. {\bf 259}  (2008), 827--848.
%
\bibitem[Ki1]{KI1}\textsc{R.~Kiehn}, 
{Falaco Solitons, Cosmic strings in a swimming pool}, \\
http://arxiv.org/ftp/gr-qc/papers/0101/0101098.pdf.
%
\bibitem[Ki2]{KI2}\textsc{R.~Kiehn}, 
{Falaco Solitons, Cosmology And The Arrow Of Time ... Vol2. Non-equilibrium Systems And Irreversible Processes.} 
Publisher: Lulu Press, ISBN: 184728194X. 
%
\bibitem[KY]{KY}\textsc{Y.~W.~Kim and S.-D. Yang},
{A family of maximal surfaces in Lorentz-Minkowski three-space}, Proc. Amer. Math. Soc. {\bf 134} (2006), 3379--3390.
%
\bibitem[Ko1]{koba}\textsc{O.~Kobayashi}, 
{Maximal surfaces in the 3-dimensional Minkowski space $L^3$}, Tokyo J. Math. {\bf 6} (1983), 297--309.
%
\bibitem[Ko2]{koba1}\textsc{O.~Kobayashi}, 
{Maximal surfaces with conelike
singularities}, J. Math. Soc. Japan {\bf 36} (1984), 609--617.
%
\bibitem[KU]{KU}\textsc{M.~Kokubu and M.~Umehara}, 
{Orientability of linear Weingarten surfaces, 
     spacelike CMC-1 surfaces and maximal surfaces}, 
preprint, arXiv:0907.2284.
%
\bibitem[L1]{L1}\textsc{F.~J.~L\'{o}pez},
{A complete minimal Klein bottle in $R\sp 3$}, 
    Duke Math. J. {\bf 71}  (1993), 23--30.
%
\bibitem[L2]{L2}\textsc{F.~J.~L\'{o}pez},
{On complete nonorientable minimal surfaces with low total curvature},
    Trans. Amer. Math. Soc. {\bf 348}  (1996), 2737--2758.
%
\bibitem[LLS]{LLS}\textsc{F.~J.~L\'{o}pez, R.~L\'{o}pez and R.~Souam}, 
{Maximal surfaces of Riemann type in Lorentz-Minkowski space 
     $\Bbb L\sp 3$}, 
    Michigan Math. J. {\bf 47} (2000), 469--497.
%
\bibitem[LM1]{LM1}\textsc{F.~J.~L\'{o}pez and F.~Mart\'{i}n}, 
{Complete nonorientable minimal surfaces and symmetries},
    Duke Math. J. {\bf 79}  (1995), 667--686.
%
\bibitem[LM2]{LM2}\textsc{F.~J.~L\'{o}pez and F.~Mart\'{i}n}, 
{Complete nonorientable minimal surfaces with the highest symmetry group},
    Amer. J. Math. {\bf 119}  (1997), 55--81.
%
\bibitem[MT]{mt}\textsc{J.~E.~Marsden and F.~J.~Tipler}, 
{Maximal hypersurfaces and foliations of constant mean curvature in general
relativity},  Phys. Rep. {\bf  66} (1980), 109--139.
%
\bibitem[Mc]{Mc}\textsc{L.~McNertney}, 
{One-parameter families of surfaces with constant curvature 
     in Lorentz 3-space},
Ph.\ D.\ Thesis, Brown Univ., 1980.
%
\bibitem[Me]{Me}\textsc{W.~H.~Meeks III}, 
{The classification of complete minimal surfaces in $R\sp{3}$ with 
     total curvature greater than $-8\pi$},
    Duke Math. J.  {\bf 48}  (1981), 523--535.
%
\bibitem[R]{R}\textsc{M.~Ross}, 
{Complete nonorientable minimal surfaces in $\mathbf R\sp 3$},
    Comment. Math. Helv. {\bf 67}  (1992), 64--76.
%
\bibitem[UY]{UY}\textsc{M.~Umehara and K.~Yamada}, 
{Maximal surfaces with singularities in Minkowski space},
    Hokkaido Math. J. {\bf 35}  (2006), 13--40.
\end{thebibliography}
\end{document}